\newcommand{\diff}{\mathrm{d}}
\newcommand{\ind}[1]{\mathds{1}_{\{#1\}}}
\numberwithin{equation}{section}
\numberwithin{figure}{section}
\numberwithin{table}{section}
\def\MRV{\mathcal{MRV}}
\def\RV{\mathcal{RV}}
\def\bC{\mathbb{C}}
\def\bzero{\boldsymbol 0}
\def\bone{\boldsymbol 1}
\def\bx{\boldsymbol x}
\def\ba{\boldsymbol a}
\def\cG{\mathcal{G}}
\def\cV{\mathcal{V}}
\def\cE{\mathcal{E}}
\newcommand{\field}[1]{\mathbb{#1}}
\DeclareMathOperator{\PR}{\field{P}}             
\DeclareMathOperator{\E}{\field{E}}              
\def\N{\field{N}}                                
\def\R{\field{R}}                                
\def\F{\field{F}}                                
\def\M{\field{M}}                                
\newcommand{\convp}{\stackrel{p}{\longrightarrow}}
\def\PR{\mathop{\rm I\kern -0.20em P}\nolimits}  
\def\E{\mathop{\rm I\kern -0.20em E}\nolimits}   
\def\N{\mathop{\rm I\kern -0.20em N}\nolimits}   
\def\R{\mathop{\rm I\kern -0.20em R}\nolimits}   
\def\F{\mathop{\rm I\kern -0.20em F}\nolimits}   
\theoremstyle{plain} 
\newtheorem{theorem}{Theorem}[section]
\newtheorem{lemma}[theorem]{Lemma}
\newtheorem{prop}[theorem]{Proposition}
\theoremstyle{definition} 
\newtheorem{defn}[theorem]{Definition}
\newtheorem{example}[theorem]{Example}
\newtheorem{remark}{Remark}
\newcommand{\tw}[1]{{\color{blue} #1}}
\begin{document}

\title{{Asymptotic Behavior of Common Connections in Sparse Random Networks}}

 \author{ {Bikramjit Das}\thanks{Engineering Systems and Design, Singapore University of Technology and Design, 8 Somapah Road, 487372 Singapore, \emph{E-mail:} {bikram@sutd.edu.sg}} 
 \and 
 {Tiandong Wang}\thanks{Department of Statistics, Texas A\&M University, TX77843, United States, \emph{E-mail:} {twang@stat.tamu.edu}}
 \and
{Gengling Dai}\thanks{Engineering Systems and Design, Singapore University of Technology and Design, 8 Somapah Road, 487372 Singapore, \emph{E-mail:} gengling\_dai@mymail.sutd.edu.sg}
}

\maketitle
\begin{abstract}

Random network models generated using sparse exchangeable graphs have provided a mechanism to study a wide variety of complex real-life networks. In particular, these models help with investigating power-law properties of degree distributions, number of edges, and other relevant network metrics which support the scale-free structure of networks. Previous work on such graphs imposes a marginal assumption of univariate regular variation  (e.g., power-law tail) on the bivariate generating graphex function. In this paper, we study sparse exchangeable graphs generated by graphex functions which are multivariate regularly varying. We also focus on a different metric for our study: the distribution of the number of common vertices (connections) shared by a pair of vertices. The number being high for a fixed  pair is an indicator of the original pair of vertices being connected. We find that the distribution of number of common connections are regularly varying as well, where the tail indices of regular variation are governed by the type of graphex function used. Our results are verified on simulated graphs by estimating the relevant tail index parameters.

\end{abstract}

\bigskip
\noindent{\bf MSC 2010 subject classifications.}
05C82, 60F15, 60G70

\bigskip
\noindent{\bf Key words.} Random networks, common connections, power laws, multivariate regular variation.

\section{Introduction}

Modeling social, economic, and biological networks has become a principal area of interest for data scientists in the recent decades. The degree distribution provides an idea about the structure of the network, e.g., a power-law behavior here indicating the so-called ``scale-free" structure; see \cite{voitalov_etal:2019} for a recent discussion. Another quantity of potential interest to network scientists, especially in the context of social networking platforms like  Facebook, Instagram, LinkedIn, and Twitter, is the number of common connections or common friends between two vertices. This can be thought of as a generalization of the degree of a vertex. Network recommendation systems use this number as one of the metrics to suggest potential new vertices (friends) to platform users \citep{guptaetal:2013}.  The connection between \emph{link prediction} and the size of common connections has been explored in the literature; see \cite{liben-nowell:2007,newman:2001,barabasi:2002} for further discussions.
 
Although of interest to network scientists, only few theoretical results exist in the literature characterizing the asymptotic distribution of common connections for growing network models. While studying clustering coefficients for random intersection graph, \cite{bloznelis:kurauskus:2016} find the asymptotic distribution of common connections for a randomly selected pair of vertices. In addition, for a certain class of linear preferential attachment models, \cite{das:ghosh:2021} identify the growth rate for common connections between two fixed vertices in the graph. In our paper, we study common connections in the context of the erstwhile popular and flexible framework of exchangeable random graph models called \emph{graphex processes}.

Graphex models, or vertex-exchangeable random graphs have been used extensively to study statistical network models that are dense (\cite{hoover:1979}, \cite{aldous:1981}, \cite{lovasz:szegedy:2006}, \cite{diaconis:janson:2008}). Recent work has successfully extended the classical graphex framework to sparse networks, which mimics many real-world networks; see \cite{caron:fox:2017,borgs:2019}. In particular, they point out that exchangeable random measures admit a representation theorem due to \cite{kallenberg:1990}, giving a general construction for such graph models. Moreover, \cite{veitch:roy:2015} and \cite{borgs:chayes:cohn:holden:2018} show how such construction naturally generalizes the dense exchangeable graphex framework to the sparse regime, and analyze some of the properties of the associated class of random graphs, called graphex processes; also see for instance, \cite{janson:2016, janson:2017}, \cite{veitch:roy:2019} and \cite{borgs:2019}. Furthermore, \cite{herlau_etal:2016} and \cite{todeschini_etal:2020} have developed sparse graph models with (overlapping) community structure within the graphex framework. 

 In this paper, we work under the framework of sparse vertex-exchangeable random graphs generated by graphex functions which are \emph{multivariate regularly varying}, a modest generalization of power-law tailed functions. The regular variation of the graphex function creates a sparse graph, and we show that the distribution of common connections in the generated graphs is also regularly varying, i.e., approximately power-law tailed. Moreover, we observe that depending on the type of the graphex function, i.e., whether it is a product of univariate functions (called {\it{separable}}), or not (called {\it{non-separable}}), we get different asymptotic behaviors of common connections in the random graph. We estimate necessary parameters in simulated random graphs to numerically verify and support the results obtained.

The remaining of the paper proceeds as follows. In Section \ref{sec:randomgraphs}, we introduce the random graph framework, as well as necessary definitions and assumptions of multivariate regularly varying functions in this context. We also characterize effects of the regular variation assumption on graphex functions to particular marginals of the graphex functions. The main results on the asymptotic distribution of  common connections are collected in Section~\ref{sec:res}. In addition, Section~\ref{sec:sim} presents results from simulation studies, where we provide typical examples of sparse random networks and verify our results of Section~\ref{sec:res}.  Finally, concluding remarks are given in Section~\ref{sec:conclusion}.


\section{Random graphs, regular variation and graphex functions}\label{sec:randomgraphs}
In this section, we create our framework for network models via graphex processes, discuss multivariate regular variation, and derive regular variation properties for functionals of graphex processes.

\subsection{Random graph models}\label{subsec:sparsegraphs}
We now present a framework which models sparse random graphs using (infinite) Poisson point processes on $\R_+^2=[0,\infty)^2$. Finite graphs are obtained by appropriately truncating the support of the point process. The model representation is based on the work of \cite{caron:fox:2017,veitch:roy:2015,borgs:chayes:cohn:holden:2018,caron_etal:2020}. Here we follow their construction and notation. Such representation is also related to Kallenberg exchangeable graphs \citep{kallenberg:1990}, and can be characterized by a symmetric measurable function, $W:[0,\infty)^2\mapsto[0,1]$, often called a \emph{graphex function}. A variety of sparse random graphs can be parametrized using the graphex function $W$ (we ignore terms corresponding to isolated edges and stars).

Let $\Pi=(\theta_i,\eta_i)_{i=1,2,...}$ be a unit-rate Poisson process on $\R_+^2$.  In this representation, vertices are embedded at some location $\theta_i\in \R_+$, and there is a latent variable $\eta_i\in(0,\infty)$ which along with the graphex function $W$ determines the edges of the graph.  For this paper, we will define a graphex function as follows.
\begin{defn}\label{ass:W}
A symmetric measurable function $W:[0,\infty)^2\mapsto[0,1]$ is a \emph{graphex function} if it satisfies the following criteria:
\begin{enumerate}[(a)]
\item $0<\overline{W}=\int_0^\infty \int_0^\infty  W(x,y) \,\diff x \diff y<\infty$;
\item $\int_0^\infty W(x,x) \, \diff x<\infty$;
\item $\lim\limits_{x\to0} W(x,x)$ and $\lim\limits_{x\to\infty} W(x,x)$ both exist.
\end{enumerate}
\end{defn}
Note that we have $\lim_{x\to0} W(x,x)=0$, 
since $\overline{W}<\infty$.  
Other necessary regularity conditions on $W$ will be imposed later. Edges of the graph induced by $\Pi$ and $W$ are given by a point process:
  \begin{align*}
  Z= \sum_{i,j} Z_{ij} \delta_{(\theta_i,\theta_j)}
  \end{align*}
  where $Z_{ij}=Z_{ji}$ is a binary variable which takes the value 1 if there is an edge between $\theta_i$ and $\theta_j$ and 0 otherwise.
   Given the graphex function $W$ and $i\le j$ , we have
  \begin{align*}
  Z_{ij}|\Pi,W \sim \text{Bernoulli} (W(\eta_i,\eta_j))
  \end{align*}
and $\{Z_{ij}: i\le j\}$ are independent random variables. We obtain a finite size random graph family $(\mathcal{G}_t)_{t\ge0}$, often called a graphex process, by restricting  the process $\sum_{i,j} Z_{ij} \delta_{(\theta_i,\theta_j)}$ to  $[0,t]^2$. Here $\mathcal{G}_t = (\cE_t,\cV_t)$ denotes a graph of size $t\ge 0$ with edge set $\cE_t$ and vertex set $\cV_t$ given by 
 \begin{align*}
 \cE_t &= \{ \theta_i | \theta_i \le t \text{ and } Z_{ik}=1 \text{ for some $k$ with } \theta_k\le t\},\\
  \cV_t &= \{ (\theta_i,\theta_j) | \theta_i, \theta_j \le t \text{ and } Z_{ij}=1\}.
 \end{align*}

The following quantities are of interest for studying properties of graphex processes: the univariate and bivariate graphex marginal functions $\mu_{1}:\R_+\to\R_+$ and $\mu_{2}:\R_+^2\to\R_+$ are given by
\begin{align}
\mu_1(x) & = \int_0^\infty W(x,y) \diff y, \label{eq:mu}\\
\mu_2(x,y) & = \int_0^\infty W(x,z) W(y,z) \diff z. \label{eq:mu2}
\end{align}
The integrability of $W$ in $\R_+^2$ guarantees that both  $\mu_1$ and $\mu_2$ are well-defined and finite. We can check that $\mu_1(\eta_i)$ is proportional to  the average number of vertices that a vertex $\theta_i$ is connected to, whereas $\mu_2(\eta_i,\eta_j)$ is proportional to the average number of common connections (vertices) between the two vertices at $\theta_i$ and $\theta_j$. We also further define the $d$-variate graphex marginal for $d\ge 3$ as:
\begin{align}\label{eq:mud}
\mu_d(x_1,\ldots, x_d) := \int_0^\infty\prod_{i=1}^d W(x_i,z)\diff z, \qquad x_i\neq x_j,\quad i\neq j.
\end{align}

\subsection{Regular variation}\label{subsec:regvar}
Studies involving the Kallenberg exchangeable \emph{sparse} graphs have been occasionally carried out under the assumption that the univariate graphex marginal $\mu_1(x)$  has a power-law-like tail (regularly varying) as $x\to\infty$; see \cite{naulet_etal:2021,caron_etal:2020}. Instead of imposing a condition on $\mu_1$, we start by imposing important conditions on the generating graphex function, $W$, which  
is the direct input for the generation of the underlying network.
It turns out that the regularity conditions on $W$ are also closely connected to 
 the theory of multivariate regular variation \citep{bingham:goldie:teugels:1989,resnick:2007},
 which we now outline. 

A function $f:(0,\infty) \to (0,\infty)$ is regularly varying (at infinity) if $\lim_{t\to\infty} f(tx)/f(t) = x^{\beta}$ for $x>0$ and $\beta\in\R$. Here $\beta$ is the tail index or index of regular variation and we write $f\in \RV_{\beta}$.
In dimensions $d>1$ this can be extended to multivariate regular variation, cf. \cite{stam:1977,dehaan:resnick:1987}: suppose $\bC\subset \R_+^d$ is a cone, that is, $\bx\in \bC$ if and only if $t\bx\in \bC$ for all $t>0$. A function $f:\bC\to(0,\infty)$ is (multivariate) regularly varying with limit function $\lambda$ with $\lambda(\bx)>0$ for $\bx\in\bC$ and tail index $\beta$: if there exists a function $g:(0,\infty)\to (0,\infty)$ with $g\in\RV_{\beta}$ such that
\begin{align}\label{eq:mrvg}
\lim_{t\to\infty} \frac{f(t\bx)}{g(t)} &= \lambda(\bx), \bx \in \bC,
\end{align}
or, equivalently: if there exists a function $b:(0,\infty)\to (0,\infty)$ with $b\in\RV_{-1/\beta}$
\begin{align}\label{eq:mrvb}
\lim_{t\to\infty} t{f(b(t)\bx)} &= \lambda(\bx), \bx \in \bC.
\end{align}
As a consequence, we have the homogeneity property that $\lambda(t\bx) = t^{\beta} \lambda(\bx), \bx\in \bC, t>0$. Moreover, we can obtain \eqref{eq:mrvb} from \eqref{eq:mrvg} by defining $b(t) = g^{\leftarrow}(1/t)$ where $f^{\leftarrow}$ denotes the generalized left inverse of a monotone function; since without loss of generality, $g$ can be assumed to be monotone.  We write $f\in \MRV(\beta,g,\lambda,\bC)$ when following \eqref{eq:mrvg} or $f\in \MRV(\beta,b,\lambda,\bC)$ when following \eqref{eq:mrvb}; one or more of the parameters are also often dropped  for convenience. Both $b$ and $g$ are referred to as \emph{scaling} functions.

\subsection{Tail behavior of the univariate and bivariate graphex marginals}\label{subsec:regvarmargin}

The following result characterizes the asymptotic behaviour of $\mu_1$ and $\mu_2$ at infinity, given that $W$ is multivariate regularly varying. The proof uses ideas from  \cite[Theorem 2.1]{dehaan:resnick:1987}. A uniformity condition in addition to regular variation is imposed to guarantee regular variation of the integrals $\mu_1,\mu_2$, see  \cite[Section 2]{dehaan:resnick:1987} for further discussions. Let $\|\cdot\|: (x,y) \to \|(x,y)\|$ denote a norm on $\R^2$. 
 \begin{theorem}\label{thm:W_uni}
Suppose the graphex function $W:\R_{+}^{2}\to [0,1]$ is regularly varying on $\R^2_+\setminus\{\bzero\}$ with limit function $\omega$ and scaling function $h\in \RV_{-\alpha}$ where $\alpha>1$. Moreover assume that $\omega$ is bounded on  the unit sphere $\aleph :=\{(x,y) \in \R_+^2\backslash\{\mathbf{0}\} :\|(x,y)\| = 1\}$ and satisfies the uniformity condition
\begin{equation}
\label{W_uni}
\lim_{t\rightarrow\infty} \sup_{(x,y)\in \aleph} \Bigg|\frac{W(tx,ty)}{h(t)}-\omega(x, y)\Bigg| = 0.
\end{equation}
Then the following holds:
 \begin{enumerate}[(i)]
 \item The univariate graphex marginal $\mu_1$ is regularly varying with tail index $-(\alpha-1)$ and
 \begin{equation}
\label{mu1_uni}
\lim_{t\rightarrow\infty} \frac{\mu_1(tx)}{th(t)}=\int_0^\infty\omega(x,y) \,\diff y\quad\quad x>0.
 \end{equation}
 \item The bivariate graphex marginal $\mu_2$ is regularly varying on $(0,\infty)^2$ with tail index $-(2\alpha-1)$  and
  \begin{equation}
\label{mu2_bi}
\lim_{t\rightarrow\infty} \frac{\mu_2(tx,ty)}{th^2(t)}=\int_0^\infty\omega(x,z)\omega (y,z) \,\diff z\quad\quad (x,y) \in (0,\infty)^2.
 \end{equation}
\item The convergence in \eqref{mu2_bi} is uniform on sets bounded away from the axes, i.e., for any fixed $\delta>0$:
 \begin{align}\label{mu2_bi_uc}
 \lim_{t\rightarrow\infty} \sup_{x\wedge y \ge \delta} \Bigg|\frac{\mu_2(tx,ty)}{th^2(t)}-\int_0^\infty\omega(x,z)\omega (y,z) \; \diff z\Bigg| = 0.
 \end{align}
%
\end{enumerate} 

\end{theorem}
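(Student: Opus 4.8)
The plan is to reduce each marginal to an integral of the normalized ratio $W(t\,\cdot\,)/h(t)$ by a scaling change of variables, pass the limit inside using the uniformity condition \eqref{W_uni}, and justify the interchange with a dominating function built from Potter's inequality, in the spirit of \cite[Theorem 2.1]{dehaan:resnick:1987}. For part (i), substituting $y=ts$ in \eqref{eq:mu} gives $\mu_1(tx)/(t h(t)) = \int_0^\infty W(tx,ts)/h(t)\,\diff s$, and substituting $z=ts$ in \eqref{eq:mu2} gives $\mu_2(tx,ty)/(t h^2(t)) = \int_0^\infty [W(tx,ts)/h(t)]\,[W(ty,ts)/h(t)]\,\diff s$. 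The integrands converge pointwise: writing $(tx,ts)=t\|(x,s)\|\,\widehat{(x,s)}$ with $\widehat{(x,s)}\in\aleph$, I factor $W(tx,ts)/h(t)=[W(t\|(x,s)\|\,\widehat{(x,s)})/h(t\|(x,s)\|)]\cdot[h(t\|(x,s)\|)/h(t)]$; the first bracket tends to $\omega(\widehat{(x,s)})$ by \eqref{W_uni} (note $t\|(x,s)\|\to\infty$ since $x>0$ keeps $\|(x,s)\|$ bounded below), while the second tends to $\|(x,s)\|^{-\alpha}$ because $h\in\RV_{-\alpha}$, so the product tends to $\omega(x,s)$ by the homogeneity $\omega(cx,cy)=c^{-\alpha}\omega(x,y)$. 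This identifies the claimed integrands $\omega(x,s)$ and $\omega(x,s)\omega(y,s)$.

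Next I establish domination. Bounding the first bracket above by $\sup_{\aleph}\omega+1$ for large $t$ and the ratio $h(t\|(x,s)\|)/h(t)$ by $C_\epsilon\max(\|(x,s)\|^{-\alpha+\epsilon},\|(x,s)\|^{-\alpha-\epsilon})$ via Potter's inequality (valid uniformly in $s$ once $t$ is large, since $x>0$ forces $t\|(x,s)\|$ past any threshold), I obtain an $s$-integrable majorant: the $\mu_1$ integrand is controlled by a function behaving like $s^{-\alpha+\epsilon}$ as $s\to\infty$, integrable precisely because $\alpha>1$, while the $\mu_2$ integrand is controlled by one behaving like $s^{-2\alpha+2\epsilon}$, integrable for the same reason. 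Dominated convergence then yields \eqref{mu1_uni} and \eqref{mu2_bi}. The tail indices follow from a scaling check: $t h(t)\in\RV_{-(\alpha-1)}$ and $t h^2(t)\in\RV_{-(2\alpha-1)}$, and a change of variables in the limiting integrals shows $\int_0^\infty\omega(x,y)\,\diff y$ and $\int_0^\infty\omega(x,z)\omega(y,z)\,\diff z$ are homogeneous of degrees $-(\alpha-1)$ and $-(2\alpha-1)$, confirming regular variation.

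For the uniform statement \eqref{mu2_bi_uc} in part (iii), I would split $\{x\wedge y\ge\delta\}$ into the compact rectangle $[\delta,K]^2$ and its complement. On $[\delta,K]^2$ the bound $x\wedge y\ge\delta$ makes the domination above uniform in $(x,y)$, so the convergence upgrades to uniform convergence through a truncation $\int_0^A+\int_A^\infty$: the tail is made uniformly small by the integrable majorant, and the head is handled by the uniform convergence of the integrand over a compact $s$-range. On the complement, where $x\vee y>K$, I show that both $\mu_2(tx,ty)/(t h^2(t))$ and its candidate limit are uniformly small for large $K$; assuming $y>K$, Potter's bounds give $W(ty,ts)/h(t)\lesssim\max(y,s)^{-\alpha+\epsilon}$, and splitting the $s$-integral at $s=y$ bounds the whole integral by a constant multiple of $y^{-\alpha+\epsilon}+y^{1-2\alpha+2\epsilon}$, which tends to $0$ uniformly as $K\to\infty$, with the identical estimate applied to $\omega$ controlling the limit.

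I expect this last extension to the unbounded region to be the main obstacle. The set $\{x\wedge y\ge\delta\}$ contains directions arbitrarily close to the axes whenever one coordinate is large, so one cannot simply reduce to a compact subset of $\aleph$ and apply \eqref{W_uni}; instead the argument must exploit the genuine tail decay supplied by $\alpha>1$ to annihilate the contribution of the large-coordinate regime uniformly, which is exactly where the careful Potter-type tail estimates carry the proof.
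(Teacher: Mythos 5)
Your argument is correct, and for parts (i) and (ii) it coincides with the paper's own proof: the same scaling change of variables, the same factorization $W(tx,ts)/h(t)=\bigl[W(t\|(x,s)\|\,\widehat{(x,s)})/h(t\|(x,s)\|)\bigr]\cdot\bigl[h(t\|(x,s)\|)/h(t)\bigr]$, Potter's bounds to build the integrable majorant $C\|(x,s)\|^{-\alpha+\rho}$ (with $\rho<\alpha-1$, which you leave implicit in "$\epsilon$ small enough"), and dominated convergence. Where you genuinely diverge is part (iii). The paper first invokes \citet[Theorem 2.1]{dehaan:resnick:1987} to upgrade \eqref{W_uni} to uniform convergence on the \emph{unbounded} set $\{\|(x,y)\|>\delta\}$, and then uses the algebraic splitting $|AB-ab|\le A|B-b|+b|A-a|$ inside the $z$-integral; the key observation making this work is that $x\wedge y>\delta$ forces $\|(x,z)\|\ge x>\delta$ and $\|(y,z)\|\ge y>\delta$ for \emph{every} $z\ge 0$, so the uniform estimate applies to both difference factors under the integral, while the companion factors $\int_0^\infty W(tx,tz)/h(t)\,\diff z$ and $\int_0^\infty\omega(y,z)\,\diff z$ are uniformly bounded by the Potter/homogeneity estimates from parts (i)--(ii). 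You instead exhaust $\{x\wedge y\ge\delta\}$ by the compact rectangle $[\delta,K]^2$ (where uniform convergence of the integrand on compacta--derivable from \eqref{W_uni} together with the uniform convergence theorem for regularly varying $h$--plus a truncation $\int_0^A+\int_A^\infty$ does the job) and annihilate the complement $\{x\vee y>K\}$ by showing both $\mu_2(tx,ty)/(th^2(t))$ and its limit are $O(K^{-\alpha+\rho}+K^{1-2\alpha+2\rho})$ there, uniformly. Both routes are sound and rest on the same Potter-type integrability from $\alpha>1$; the paper's is shorter once the de Haan--Resnick uniformity upgrade is granted, whereas yours is more self-contained, needing only uniform convergence on compact sets bounded away from the origin at the price of an extra compact/non-compact decomposition and tail estimate. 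Your closing diagnosis--that the unbounded directions near the axes are the real obstacle--is exactly the point; the paper's one-line answer to it is the inclusion $\{x\wedge y>\delta\}\subset\{\|(x,y)\|>\delta\}$ applied for each fixed $z$.
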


\begin{proof}
First, using a change of variable $v=tz$ we have
\begin{align}
\frac{\mu_1(tx)}{th(t)} & = \int_0^\infty \frac{W(tx,z)}{th(t)} \, \diff z = \int_0^\infty \frac{W(tx,tv)}{h(t)} \, \diff v, \quad\quad x>0, \label{eq:mu1con}\\
\frac{\mu_2(tx,ty)}{th^2(t)} & = \int_0^\infty \frac{W(tx,z)W(ty,z)}{th^2(t)} \, \diff z = \int_0^\infty \frac{W(tx,tv)}{h(t)}\frac{W(ty,tv)}{h(t)} \, \diff v \label{eq:mu2con}, \quad\quad (x,y)\in (0,\infty)^2.
\end{align}
By our assumptions, the integrands in \eqref{eq:mu1con} and \eqref{eq:mu2con} are non-negative and converge to $\omega(x,v)$ and $\omega(x,v)\omega(y,v)$ respectively as $t\to \infty$. Hence if we show that the integrands are bounded by an integrable function and the limit are integrable, then \eqref{mu1_uni} and \eqref{mu2_bi} holds using dominated convergence.  Since all norms in $\R^{2}$ are equivalent,  without loss of generality let us fix a particular norm in $\R^2$ given by $\|(x,y)\| = |x|\vee |y|$. 

 \begin{enumerate}[(i)]
 \item For fixed $x>0$, and any $v\ge 0$,  with $t>0$,
\begin{align*}
\frac{W(tx,tv)}{h(t)} = \frac{W(t\|(x,v)\|. \frac{x,v}{\|(x,v)\|})}{h(t\|(x,v)\|)} \frac{h(t\|(x,v)\|)}{h(t)}.
\end{align*}

Since $\|(x,v)\| \ge x>0$, and  $(x,v)/\|(x,v)\|\in \aleph$, given a fixed $\eta>0$, we obtain from \eqref{W_uni} that for $t>t_{0}\equiv t_{0}(x,\eta)$,
\begin{align}\label{eq:unifbd}
\frac{W\left(t\|(x,v)\|\frac{(x,v)}{\|(x,v)\|}\right)}{h(t\|(x,v)\|)} \le \sup_{\ba \in \aleph} \omega(\ba) +\eta  <\infty,
\end{align}
since $\omega$ is bounded on $\aleph$.
Using Potter's bounds \citep{resnick:2007}, for a large enough $t$, $h(t\|(x,v)\|)/h(t)$ is bounded by $c\|(x,v)\|^{-\alpha+\rho}$ for a constant $c>0$ and  $\rho$ such that $0<\rho<\alpha-1$. Hence,
\begin{align}
\frac{W(tx,tv)}{h(t)}  &\le C \|(x,v)\|^{-\alpha+\rho} , \label{eq:Wbd}\\
\intertext{for some constant $C>0$, and}
\int_0^{\infty}C \|(x,v)\|^{-\alpha+\rho} \, \diff v & = C x^{-\alpha+\rho} \int_0^{x} \diff v + C\int_x^{\infty}u^{-\alpha+\rho} \, \diff v <\infty,  \label{eq:Wbdint}
\end{align}
which shows that the integrand in \eqref{eq:mu1con} is bounded by an integrable function. By homogeneity property we also have $\omega(t(x,y)) = t^{-\alpha}\omega(x,y)$ for $(x,y)\in ,\R^2_+\setminus \{\bzero\}$ and hence
\begin{align}\label{eq:intomegafinite}
\int_0^{\infty} \omega(x,y) \;\diff y & = \int_0^{\infty} \|(x,y)\|^{-\alpha}\omega\left(\frac{(x,y)}{\|(x,y)\|}\right) \;\diff y
                                                                  \le \sup_{\ba \in \aleph} \omega(\ba)  \int_0^{\infty} (x\vee y)^{-\alpha}  \;\diff y <\infty,
 \end{align}
since $\omega$ is bounded in $\aleph$ and $\alpha>1$. Hence \eqref{mu1_uni} is a consequence of $W\in \MRV(-\alpha,h,\omega)$, \eqref{eq:Wbd}, \eqref{eq:Wbdint}, \eqref{eq:intomegafinite} and the dominated convergence theorem. Therefore $\mu_{1}\in \RV_{{-(\alpha-1)}}$.
 
\item Using the bounds in \eqref{eq:Wbd}, \eqref{eq:Wbdint} (and finding similar bounds for $W(ty,tv)/h(t)$), we can conclude that for any $(x,y) \in (0,\infty)^2$, the integrand in \eqref{eq:mu2con} is bounded by an integrable function. To see that the limit  (as $t\to\infty$) of the integrand in\eqref{eq:mu2con} is integrable, observe that
\begin{align*}
\int\limits_0^{\infty} \omega(x,z)\omega(y,z) \;\diff z & = \int\limits_0^{\infty} \|(x,z)\|^{-\alpha} \omega\left(\frac{(x,z)}{\|(x,z)\|}\right)\|(y,z)\|^{-\alpha} \omega\left(\frac{(y,z)}{\|(y,z)\|}\right) \;\diff z\\
& \le  \left( \sup_{\ba \in \aleph} \omega(\ba) \right)^2  \int\limits_0^{\infty} (x\vee z)^{-\alpha}  (y\vee z)^{-\alpha}  <\infty,
\end{align*}
since $\omega$ is bounded in $\aleph$ and $\alpha>1$.  Then \eqref{eq:mu2con} follows from the dominated convergence theorem, and we have $$\mu_{2}\in \MRV\left(-(2\alpha-1),th^{2}(t), \int_{0}^{\infty} \omega(x,z)\omega(y,z) \,\diff z,(0,\infty)^{2}\right).$$
\item From \citet[Theorem 2.1]{dehaan:resnick:1987} we know that the uniformity condition \eqref{W_uni} also implies that for any $\delta >0$,
\begin{align}\label{W_uni_alt}
\lim_{t\rightarrow\infty} \sup_{\|(x,y)\|>\delta} \Bigg|\frac{W(tx,ty)}{h(t)}-\omega(x, y)\Bigg| = 0.
\end{align}
Now for fixed $\delta>0$ and $x \wedge y >\delta$, 
 \begin{align*}
  \Bigg|&\frac{\mu_2(tx,ty)}{th^2(t)}-\int_0^\infty\omega(x,z)\omega (y,z)\Bigg|  \le \int_0^\infty  \Bigg| \frac{W(tx,tz)W(ty,tz)}{h^2(t)} -\omega(x,z)\omega(y,z)\Bigg|\; \diff z\\
 & \le \int_0^\infty   \frac{W(tx,tz)}{h(t)}\Bigg| \frac{W(ty,tz)}{h(t)} -\omega(y,z)\Bigg|\; \diff z +  \int_0^\infty  \omega(y,z)\Bigg| \frac{W(tx,tz)}{h(t)} -\omega(x,z)\Bigg|\; \diff z.
 \end{align*}
 Since $\{(x,y)\in \R^2_+: x\wedge y>\delta\}\subset\{(x,y)\in \R^2_+: x\vee y = \|(x,y)\|>\delta\}$, then by \eqref{W_uni_alt}, given $\epsilon>0$, there exists $t_0$ such that for
$t> t_0$, both $\big| \frac{W(ty,tz)}{h(t)} -\omega(y,z)\big|<\epsilon$ and $\big| \frac{W(tx,tz)}{h(t)} -\omega(x,z)\big|<\epsilon$ for any $(x,y)$ with $x\wedge y>\delta$. Therefore for $t>t_{0}$,
 \begin{align*}
  \sup_{x\wedge y>\delta}\Bigg|\frac{\mu_2(tx,ty)}{th^2(t)}-\int_0^\infty\omega(x,z)\omega (y,z)\Bigg| & \le \epsilon   \sup_{x>\delta} \int_0^\infty   \frac{W(tx,tz)}{h(t)} \,\diff z +  \epsilon   \sup_{ y>\delta}\int_0^\infty   \omega(y,z)\; \diff z\\
 &  = \epsilon  \sup_{x>\delta} A_{x,t}+\epsilon  \sup_{y>\delta} B_y \quad \text{(say)}.
  \end{align*}
  From \eqref{eq:Wbd} and \eqref{eq:Wbdint}, since $x>\delta>0$, for large enough $t$, $\sup\limits_{x>\delta} A_{x,t} \le C_1$ for some constant $C_1>0$. Also using\eqref{eq:intomegafinite}, we can check that $\sup\limits_{y>\delta} B_{y} \le C_2$ for some constant $C_2>0.$ Therefore
   \begin{align*}
 \lim_{t\rightarrow\infty} \sup_{x\wedge y \ge \delta} \Bigg|\frac{\mu_2(tx,ty)}{th^2(t)}-\int_0^\infty\omega(x,z)\omega (y,z) \; \diff z\Bigg| \le \epsilon(C_1+C_2)
 \end{align*}  
  and since $\epsilon>0$ is arbitrary, we have \eqref{mu2_bi_uc}.
\end{enumerate}
\end{proof}

 \begin{example}\label{ex:powerlaw}
 Consider the graphex function $W:\R_+^2\to(0,\infty)$ with $\alpha>1$:
 \begin{align}\label{fn:Walpha}
 W(x,y) = \frac{1}{1+x^{\alpha}+y^{\alpha}}.
 \end{align}
 Here $$
W\in \MRV\left(-\alpha,h(t)=t^{-\alpha},\omega(x,y) = \frac{1}{x^{\alpha}+y^{\alpha}},(0,\infty)^2\right),
$$ 
and we can check that the uniformity condition \eqref{W_uni} holds. Therefore the univariate marginal $\mu_1\in \RV_{-(\alpha-1)}$ and for $x>0$,
 \begin{align*}
 \lim_{t\rightarrow\infty} \frac{\mu_1(tx)}{th(t)}=\int_0^\infty\omega(x,y) \,\diff y=  x^{-\alpha+1} \int_0^{\infty} \frac{1}{1+z^{\alpha}} \diff z = \frac{\pi}{\alpha} cosec(\pi/\alpha)\; x^{-\alpha+1}.
 \end{align*}
 See \citet[\S 3.241.2]{gradshteyn:ryzhik:2007}. Moreover, the bivariate marginal satisfies 
 $$\mu_2\in \MRV(-(2\alpha-1), th^2(t), \nu(\cdot), (0,\infty)^{2}),$$ 
 where  $\nu$ is hard to compute in closed form in general. For $\alpha=2$, we can compute $\nu$ in closed form, which is given by
 \begin{align*}
 \nu(x,y)=\lim_{t\rightarrow\infty} \frac{\mu_2(tx,ty)}{th^{2}(t)} & =\int_0^\infty \frac{1}{x^{2}+z^{2}}\frac{1}{y^{2}+z^{2}} \,\diff z = \frac{\pi}{2x^2y+2xy^2}, \quad\quad (x,y)\in (0,\infty)^{2}.\end{align*}
  \end{example}

\begin{remark}
A classical  example of a multivariate regularly varying $W$ is when it is \emph{separable}, i.e., $W(x,y)=U(x)U(y)$  with $U\in \RV_{-\alpha}$ for some $\alpha>1$. Unfortunately, the uniformity condition \eqref{W_uni} fails to hold in this case and we cannot apply Theorem \ref{thm:W_uni}. Nevertheless, we can still ascertain the tail behavior in this case easily because of the separable structure.
\end{remark}

\begin{lemma}{\label{lem:Vseparable}}
Let $W:\R_+^2\rightarrow [0,1]$ be a graphex function such that $W(x,y)=U(x)U(y)$ where $U:\R_{+}\to(0,\infty)$ satisfies $U\in \RV_{-\alpha}$ for some $\alpha>1$.
Then the following holds:
\begin{enumerate}[(i)]
\item $W \in \MRV(-2\alpha, U^{2}(t), \omega(x,y),(0,\infty)^{2})$ where
$$\lim_{t\to\infty} \frac{W(tx,ty)}{U^{2}(t)} = x^{-\alpha}y^{-\alpha}=: \omega(x,y), \quad (x,y)\in (0,\infty)^{2},$$
\item $\mu_{1} \in \RV_{-\alpha}$ where
$$\lim_{t\to\infty} \frac{\mu_{1}(tx)}{U(t)} = x^{-\alpha} \int_{0}^{\infty} U(z) \,\diff z, \quad x>0,$$
\item $\mu_{2} \in \MRV(-2\alpha, U^{2}(t), C\omega(x,y),(0,\infty)^{2})$ where $C=\int_{0}^{\infty} U^{2}(z)\, \diff z$.
\end{enumerate}
\end{lemma}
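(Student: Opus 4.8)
The plan is to exploit the separable structure $W(x,y)=U(x)U(y)$ directly. Unlike the non-separable case of Theorem~\ref{thm:W_uni}, here both graphex marginals factor into a product of $U$-terms times an explicit constant, so no uniformity condition or dominated-convergence argument is needed: each of the three claims reduces to the univariate regular variation of $U$, together with a check that the relevant normalizing constant is finite.

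For part (i), I would write
$$\frac{W(tx,ty)}{U^{2}(t)} = \frac{U(tx)}{U(t)}\cdot\frac{U(ty)}{U(t)},$$
and let $t\to\infty$. Since $U\in\RV_{-\alpha}$, the two factors tend to $x^{-\alpha}$ and $y^{-\alpha}$ for every $(x,y)\in(0,\infty)^{2}$, giving the pointwise limit $\omega(x,y)=x^{-\alpha}y^{-\alpha}$; and $U^{2}\in\RV_{-2\alpha}$ serves as the scaling function. This is exactly the assertion $W\in\MRV(-2\alpha,U^{2},\omega,(0,\infty)^{2})$ in the sense of \eqref{eq:mrvg}.

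For parts (ii) and (iii), the key step is the factorization of the integrals. For $\mu_1$ I would use
$$\mu_{1}(x)=\int_{0}^{\infty}U(x)U(z)\,\diff z = U(x)\int_{0}^{\infty}U(z)\,\diff z,$$
after first recording that $\int_{0}^{\infty}U(z)\,\diff z<\infty$: by Definition~\ref{ass:W}(a) we have $\overline{W}=\big(\int_{0}^{\infty}U(z)\,\diff z\big)^{2}<\infty$. Thus $\mu_1$ is a constant multiple of $U$, so $\mu_1\in\RV_{-\alpha}$ and $\mu_1(tx)/U(t)\to x^{-\alpha}\int_{0}^{\infty}U(z)\,\diff z$ is immediate. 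Similarly,
$$\mu_{2}(x,y)=\int_{0}^{\infty}U(x)U(z)\,U(y)U(z)\,\diff z = U(x)U(y)\int_{0}^{\infty}U^{2}(z)\,\diff z = C\,U(x)U(y),$$
where finiteness of $C=\int_{0}^{\infty}U^{2}(z)\,\diff z$ follows from Definition~\ref{ass:W}(b), since $W(x,x)=U^{2}(x)$. Then $\mu_{2}(tx,ty)/U^{2}(t)=C\,[U(tx)/U(t)][U(ty)/U(t)]\to C\,\omega(x,y)$ exactly as in part (i), yielding $\mu_{2}\in\MRV(-2\alpha,U^{2},C\omega,(0,\infty)^{2})$.

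Because everything separates, there is essentially no hard step here; the only points requiring attention are verifying that the two constants $\int_{0}^{\infty}U$ and $\int_{0}^{\infty}U^{2}$ are finite, which is precisely where the defining properties (a) and (b) of a graphex function enter, and noting that $U^{2}\in\RV_{-2\alpha}$ whenever $U\in\RV_{-\alpha}$. In contrast to Theorem~\ref{thm:W_uni}, the convergence holds pointwise on all of $(0,\infty)^{2}$ with no boundedness or uniformity hypothesis on a limit function, exactly because the integrals factor.
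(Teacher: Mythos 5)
Your proof is correct and is exactly the argument the paper intends: the paper omits the proof of this lemma as ``an easy consequence of $U\in\RV_{-\alpha}$,'' and your factorizations $W(tx,ty)/U^2(t)=[U(tx)/U(t)][U(ty)/U(t)]$, $\mu_1 = U\cdot\int_0^\infty U$, and $\mu_2 = C\,U\otimes U$ supply precisely those omitted routine details. Your tracing of the finiteness of $\int_0^\infty U(z)\,\diff z$ and $C=\int_0^\infty U^2(z)\,\diff z$ to parts (a) and (b) of Definition~\ref{ass:W} is a careful touch that the paper leaves implicit.
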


\begin{proof} 
The proof is an easy consequence of $U\in \RV_{-\alpha}$ and is omitted here.
\end{proof}

\begin{example}\label{ex:prodpowerlaw}
 Consider the graphex function $W:\R_+^2\to(0,\infty)$ with $\alpha>1$:
 \begin{align*}
 W(x,y) =U(x)U(y) := \frac{1}{1+x^{\alpha}} \frac{1}{1+y^{\alpha}}.
 \end{align*}
Clearly, 
$$
W\in \MRV\left(-2\alpha,U^{2}(t),\omega(x,y) = \frac{1}{x^{\alpha}}\frac1{y^{\alpha}},(0,\infty)^2\right).
$$  
Using Example \ref{ex:powerlaw}, the univariate marginal $\mu_1\in \RV_{-\alpha}$ and for $x>0$,
 \begin{align*}
 \lim_{t\rightarrow\infty} \frac{\mu_1(tx)}{U(t)}=x^{-\alpha}\int_0^\infty U(z)\,\diff z=   x^{-\alpha}\int_0^\infty \frac{1}{1+z^{\alpha}}\,\diff z = \frac{\pi}{\alpha}cosec(\pi/\alpha)\; x^{-\alpha}.
 \end{align*}
Moreover, $\mu_2\in \MRV(-2\alpha, U^2(t), C\omega(x,y), (0,\infty)^{2})$, where  $C=((\pi/\alpha)cosec(\pi/\alpha))^{2}.$
 \end{example}


%
%
%


\section{Common connections}\label{sec:res}

In this section, our goal is to understand the behavior of the number of common connections between two vertices, first for a fixed pair of vertices and eventually for a randomly chosen pair in a graph driven by the (Kallenberg exchangeable) graphex process as described in Section \ref{sec:randomgraphs}. 

\subsection{Distribution of common connections in $\cG_t$}
Recall that from the unit-rate Poisson process $\Pi = (\theta_i,\eta_i)_{i=1,2, \ldots}$, the finite-size graphex $\cG_t$ is created by restricting  $\sum_{i,j}Z_{ij}\delta_{(\theta_i,\theta_j)}$ to $[0,t]^2$, where $Z_{ij} \sim \text{Bernoulli} (W(\eta_i,\eta_j))$, and $Z_{ij}=Z_{ji}$. The first co-ordinate in $\Pi$ does not provide any relevant information other than picking the plausible vertices, so for ease of computation, we project $\Pi$ to its second co-ordinate as  
$$\Pi_t^{\eta}=\{\eta_i|(\theta_i,\eta_i)\in\Pi_t\},$$ 
so that $\Pi_t^{\eta}$ is a one-dimensional Poisson process with rate $t$. 

Let $I$ be an index set. Then
following \cite[Section 5]{veitch:roy:2015}, for a locally finite, simple sequence $\phi = (x_i)_{i\in I}$, and a sequence of values $\{u_{ij}\}\in [0,1]$ with $u_{ij}=u_{ji}, i,j \in I$, we define the \emph{2nd order common connections function} or \emph{2-c-degree function} at (vertices) $x_i, x_j \in \phi$ to be
 \begin{align}
 C(x_i,x_j, \phi, (u_{ij})) & := \sum\limits_{x_k\in \Pi_t^{\eta} \setminus \{x_i,x_j\}} \bone_{\{W(x_i,x_k) \ge u_{ik}\}} \bone_{\{W(x_j,x_k)>u_{jk}\}}. \label{eq:2cd}
  \end{align}
With this definition, for the graphex $\cG_t$ realized at $Z_{ij}=z_{ij}$ (which are realizations of Unif$(0,1)$ random variables), the 2-c-degree function at $(\theta_1,\eta_1), (\theta_2,\eta_2) \in \Pi_t$ is 
\begin{align} 
 C(\eta_1, \eta_2, \Pi_t^{\eta}, (z_{ij})) \label{eq:2cdforGt}.
\end{align}
Taking $\eta_1,\eta_2$ to be the proxy for the points $(\theta_1,\eta_1), (\theta_2,\eta_2) \in \Pi_t$, we refer to $C(\eta_1, \eta_2, \Pi_t^{\eta}, (z_{ij}))$ as the number of common friends/connections of $\eta_1,\eta_2$ in $\cG_t$. We can analogously  define a \emph{k-c-degree} function, for the number of common connections between $k\ge 2$ vertices as well.

We now explore how $C(\eta_1, \eta_2, \Pi_t^{\eta}, (Z_{ij}))$ behaves for fixed $t$, and later
in Section~\ref{subsec:asy}, we study the asymptotic behavior of common friends for a pair of randomly chosen vertices in $\cG_t$ as $t\to\infty$.
 Since for a finite set of points $x_1,\ldots, x_k \in \R_{+}$, the probability that $x_1,\ldots, x_k \in \Pi_t^{\eta}$ is 0,  it is traditional to use some tools from Palm theory. First note that the the conditional distribution of $\Pi_t^{\eta}$ given $x_1,\ldots, x_k$ is equivalent to the distribution of $\Pi_t^{\eta} \cup \{x_1,\ldots,x_k\}$ which is the (extended) Slivnyak-Mecke theorem if $\Pi_t^{\eta}$ is a Poisson process; see \cite[Theorem 3.3]{moller_etal:2003}. 
To make the paper self-contained, we state the extended Slivnyak-Mecke theorem below.

\begin{theorem}\label{thm:SM}
Let $S\subset \mathbb{R}^d$ and define $\cal N$ as the set of
locally finite subsets of S. Let $X = (x_i)_{i\ge 1} \in \cal N$ be a Poisson point process on $S$ with a
diffuse and locally finite mean measure $\mu$. Then, for any $n\ge 1$ and any measurable function
$h: \mathcal{N}\times S^n\mapsto [0,\infty)$,
\begin{align*}
\E\left[\sum_{\xi_1\neq\cdots\neq \xi_n\in X}h\left(X\setminus\{\xi_1,\ldots,\xi_n\}\right)\right]
&= \int_{S^n}\E\left[h\left(X\setminus\{\xi_1,\ldots,\xi_n\}\right)\right]
\mu(\diff \xi_1)\cdots \mu(\diff \xi_n).
\end{align*}
\end{theorem}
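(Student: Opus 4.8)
The plan is to deduce the statement from the single-point Mecke equation and then induct on $n$. Throughout I read $h$ as a function of both the residual configuration and the $n$ distinguished points, i.e. $h(X\setminus\{\xi_1,\ldots,\xi_n\},\xi_1,\ldots,\xi_n)$, matching the declared domain $\mathcal{N}\times S^n$; the key simplification on the right-hand side is that, because $\mu$ is diffuse, each fixed $\xi_i$ lies outside the support of $X$ almost surely, so $X\setminus\{\xi_1,\ldots,\xi_n\}=X$ $\PR$-a.s.\ there, and the claimed identity is the reduced (Slivnyak) form of the $n$-th order Mecke equation.

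First I would settle the base case $n=1$, namely
\begin{equation}\label{eq:mecke1}
\E\Bigg[\sum_{\xi\in X} f(X\setminus\{\xi\},\xi)\Bigg] = \int_S \E\big[f(X,\xi)\big]\,\mu(\diff\xi)
\end{equation}
for every measurable $f:\mathcal{N}\times S\to[0,\infty)$. By the usual monotone-class argument it suffices to verify \eqref{eq:mecke1} on a $\cap$-stable generating class of test functions, for which I would take products $f(\psi,\xi)=g(\xi)\,G(\psi)$ with $g,G\ge 0$ bounded and $g$ supported in a bounded window $B$ with $\mu(B)<\infty$. On such a window the defining property of the Poisson process may be used directly: conditionally on $X(B)=k$ the points in $B$ are i.i.d.\ with law $\mu(\cdot\cap B)/\mu(B)$, $X(B)\sim\mathrm{Poisson}(\mu(B))$, and $X$ restricted to $B^c$ is independent of $X\cap B$. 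Expanding both sides of \eqref{eq:mecke1} as sums over $k$ and using the Poisson weights $e^{-\mu(B)}\mu(B)^k/k!$ makes the two sides coincide; extending by monotone convergence as $B\uparrow S$ and then by the monotone-class theorem to all measurable $f\ge 0$ gives \eqref{eq:mecke1}.

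For the inductive step, assuming the claim for $n-1$, I would peel off the last point. Since the $n$-tuples are pairwise distinct, $\xi_1,\ldots,\xi_{n-1}$ are distinct points of $X\setminus\{\xi_n\}$, so writing
\[
F(\psi,\xi_n):=\sum_{\xi_1\neq\cdots\neq\xi_{n-1}\in\psi} h\big(\psi\setminus\{\xi_1,\ldots,\xi_{n-1}\},\xi_1,\ldots,\xi_{n-1},\xi_n\big)
\]
the full sum equals $\sum_{\xi_n\in X} F(X\setminus\{\xi_n\},\xi_n)$. Applying the base case \eqref{eq:mecke1} to the outer sum gives $\int_S \E[F(X,\xi_n)]\,\mu(\diff\xi_n)$, and applying the induction hypothesis to $\E[F(X,\xi_n)]$ (with $\xi_n$ held fixed as a parameter of $h$) turns it into the $(n-1)$-fold integral of $\E[h(X,\xi_1,\ldots,\xi_n)]$; Tonelli's theorem, legitimate because $h\ge 0$, then reassembles the $n$-fold integral over $S^n$.

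I expect the main obstacle to be bookkeeping rather than deep analysis: one must handle the pairwise-distinctness constraint in the summation carefully so that the outer/inner split is exact, and verify that the auxiliary functionals such as $F$ are jointly measurable in $(\psi,\xi_n)$ so that both \eqref{eq:mecke1} and the induction hypothesis actually apply. The diffuseness of $\mu$ enters twice and should be stated explicitly: it guarantees that $X$ is almost surely simple (so that removing a single point is well defined and the distinct-tuple sum is the natural one), and it justifies replacing $X\setminus\{\xi_1,\ldots,\xi_n\}$ by $X$ under the $\mu^{\otimes n}$ integral on the right-hand side. Local finiteness of $\mu$ is what makes the windowing argument in the base case available.
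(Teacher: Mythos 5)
Your proposal is correct, but there is no proof in the paper to compare it against: the paper states Theorem~\ref{thm:SM} ``to make the paper self-contained'' and takes it as a known result, citing Theorem~3.3 of M{\o}ller and Waagepetersen; the multivariate Mecke/Slivnyak equation is used as a black box in Proposition~\ref{prop:cdegree} and Theorem~\ref{thm:Cxy}. Your argument supplies what the paper omits, and it is the standard self-contained route (essentially the proof in Last and Penrose's \emph{Lectures on the Poisson Process}): establish the univariate Mecke equation on a bounded window by conditioning on the number of Poisson points --- the computation closes because of the weight shift $k\,\mu(B)^k/k!=\mu(B)^k/(k-1)!$, which turns the size-$k$ reduced configuration into a size-$(k-1)$ Poisson sample --- extend by monotone convergence and a monotone-class argument, then induct on $n$ by peeling off the last point and finishing with Tonelli. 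The two points you flag as bookkeeping are exactly the right ones and both go through: diffuseness of $\mu$ makes $X$ a.s.\ simple, so the outer/inner split of the distinct-tuple sum is exact, and it also lets you replace $X\setminus\{\xi_1,\ldots,\xi_n\}$ by $X$ under the $\mu^{\otimes n}$-integral, reconciling your reduced form with the form displayed in the statement. One further remark in your favor: the paper's statement writes $h(X\setminus\{\xi_1,\ldots,\xi_n\})$ while declaring $h$ on $\mathcal{N}\times S^n$, and your reading of $h$ as a function of both the residual configuration and the removed points is the intended one, since the application to $N_t^\epsilon(k)$ integrates a function of the labels $(x,y)$ as well as of the remaining process. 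For what it is worth, an earlier draft (commented out in the paper's source) proved a two-point version via Palm distributions and Slivnyak's theorem; your induction avoids Palm theory entirely at the cost of the explicit windowing computation in the base case, which is a reasonable trade and arguably more elementary.
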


With Theorem~\ref{thm:SM} available, we present in 
the following proposition that the 2-c-degree of two fixed vertices in a the graphex $\cG_t$ is Poisson distributed.

\begin{prop}{\label{prop:cdegree}}
Let $x,y\in\R_+$, 
and $\zeta_{\{i,j\}}$ be a symmetric array of iid Uniform$[0,1]$ random variables. 
Then $C(x, y, \Pi_t^\eta\cup\{x,y\},(\zeta_{\{i,j\}}))$
follows a Poisson distribution with rate $t\mu_2(x,y)$.
\end{prop}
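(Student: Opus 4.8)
The plan is to recognize $C(x,y,\Pi_t^\eta\cup\{x,y\},(\zeta_{\{i,j\}}))$ as the number of points retained under an \emph{independent thinning} of the Poisson process $\Pi_t^\eta$, and then to identify the thinned intensity. First I would clean up the summation. Since the intensity measure of $\Pi_t^\eta$ is $t\,\diff z$ on $[0,\infty)$, which is diffuse, the fixed points $x,y$ lie outside $\Pi_t^\eta$ almost surely, so $(\Pi_t^\eta\cup\{x,y\})\setminus\{x,y\}=\Pi_t^\eta$ and the sum runs over $x_k\in\Pi_t^\eta$. Assuming $x\neq y$ (the two vertices are distinct, consistent with the convention $x_i\neq x_j$ used for $\mu_d$), for each $x_k$ the unordered pairs $\{x,x_k\}$ and $\{y,x_k\}$ differ, so the marks $\zeta_{\{x,x_k\}},\zeta_{\{y,x_k\}}$ are independent $\mathrm{Uniform}[0,1]$ variables, independent across $k$ and independent of $\Pi_t^\eta$. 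Hence
\[
C=\sum_{x_k\in\Pi_t^\eta} M_k,\qquad M_k:=\bone_{\{W(x,x_k)\ge \zeta_{\{x,x_k\}}\}}\,\bone_{\{W(y,x_k)> \zeta_{\{y,x_k\}}\}}.
\]

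Next I would exploit the conditional structure. Conditionally on $\Pi_t^\eta$ the $M_k$ are independent, and since the marks are continuous,
\[
\E[M_k\mid \Pi_t^\eta]=W(x,x_k)\,W(y,x_k)=:p(x_k)\in[0,1].
\]
Thus attaching the binary mark $M_k$ to each point $x_k$ turns $\Pi_t^\eta$ into a marked Poisson process, and by the marking (independent thinning) theorem the retained points $\{x_k:M_k=1\}$ form a Poisson process on $[0,\infty)$ with intensity $p(z)\,t\,\diff z$. Consequently $C$, the number of retained points, is Poisson with mean
\[
t\int_0^\infty p(z)\,\diff z=t\int_0^\infty W(x,z)W(y,z)\,\diff z=t\,\mu_2(x,y),
\]
which is finite by the integrability of $W$. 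Equivalently, I could verify the claim by a short generating-function computation: for $s\in[0,1]$, conditioning on the points and integrating out the marks gives $\E[s^C]=\E\big[\prod_{x_k\in\Pi_t^\eta}(1-p(x_k)(1-s))\big]$, and the probability generating functional of a Poisson process with intensity $t\,\diff z$ yields $\E[s^C]=\exp\{-t(1-s)\int_0^\infty p(z)\,\diff z\}=\exp\{t\mu_2(x,y)(s-1)\}$, the generating function of the $\mathrm{Poisson}(t\mu_2(x,y))$ law.

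The main obstacle is not the arithmetic but the careful bookkeeping of the two independent sources of randomness: the Poisson process $\Pi_t^\eta$ and the external symmetric array $(\zeta_{\{i,j\}})$. I must argue cleanly that one may regard the $M_k$ as independent Bernoulli marks attached to the Poisson points, so that the marking theorem applies; the Slivnyak--Mecke theorem (Theorem~\ref{thm:SM}) is precisely what licenses treating $\Pi_t^\eta\cup\{x,y\}$ as the Palm version of the process conditioned to contain $x$ and $y$, while diffuseness of the intensity disposes of the null event $\{x,y\}\cap\Pi_t^\eta\neq\varnothing$. Once this is set up, the finiteness of $\mu_2(x,y)$ guarantees that the limiting Poisson rate $t\mu_2(x,y)$ is well defined, completing the argument.
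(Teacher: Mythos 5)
Your proof is correct, but your primary argument takes a genuinely different route from the paper's. The paper lifts the uniform marks into the point process: it treats the triples $(z,u,v)$ as a Poisson process on $\R_+\times[0,1]^2$ with intensity $t\,\diff z\,\diff u\,\diff v$ and applies Campbell's theorem to compute the characteristic function of $C(x,y)=\sum_{z}\ind{u\le W(x,z)}\ind{v\le W(y,z)}$, showing it equals that of a Poisson random variable with rate $t\mu_2(x,y)$. You instead integrate the marks out first: conditionally on $\Pi_t^\eta$, each point $z$ is retained independently with probability $p(z)=W(x,z)W(y,z)$, so the marking/thinning theorem gives that the retained points form a Poisson process with intensity $t\,p(z)\,\diff z$, whence the count is Poisson with mean $t\int_0^\infty p(z)\,\diff z=t\mu_2(x,y)$. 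What your route buys: no transform computation, and a structurally stronger conclusion (the set of common neighbours of $x$ and $y$ is itself a Poisson process on $\R_+$, not merely Poisson in number). What the paper's route buys: a single self-contained calculation needing only Campbell's theorem, with no appeal to the thinning theorem; and indeed your backup generating-function argument is essentially the paper's proof with the characteristic function replaced by the probability generating function (incidentally avoiding the paper's notational clash in which $t$ denotes both the graph size and the transform variable). Two remarks: your explicit restriction to $x\neq y$ is genuinely needed and is implicit in the paper as well --- if $x=y$ the two indicators share a single mark and the rate degenerates to $t\mu_1(x)$ rather than $t\mu_2(x,x)$; and the Slivnyak--Mecke theorem plays no logical role inside this proof --- in the paper, Theorem~\ref{thm:SM} is invoked only later, in the proof of Theorem~\ref{thm:Cxy} --- it merely motivates taking $\Pi_t^\eta\cup\{x,y\}$ as the object of study, exactly as you indicate.
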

\begin{proof}
In the sequel, we suppress the notation as 
$$C(x,y)\equiv C(x, y, \Pi_t^\eta\cup\{x,y\},(\zeta_{\{i,j\}})),$$ and use
$i(s)\equiv i(s,\Pi_t^\eta)$ to
denote the index of the point $s\in \Pi_t^\eta$ with respect to the natural
ordering on $\mathbb{R}_+$.
Then we have
\[
C(x,y)=\sum_{z\in\Pi_t^\eta} \ind{W(x,z)\geq \zeta_{i(x)i(z)}}\cdot\ind{W(y,z)\geq \zeta_{i(y)i(z)}}.
\]
Note also that
\[
\int_{\R_+}\int_{[0,1]}\int_{[0,1]} \ind{u\leq W(x,z)}\ind{v\leq W(y,z)}t\diff u \diff v \diff z=t\int_{\R_+} W(x,z)W(y,z)\diff z=t\mu_2(x,y)<\infty.
\] 
Then by Campbell's theorem (cf. \cite[Section 5.3]{kingman:1993}), the characteristic function of $C(x,y)$ is
\begin{align*}
    \E[\exp(itC(x,y))] 
     & = \E[\exp(it\sum_{z\in\Pi_t^\eta} \ind{W(x,z)\geq \zeta_{i(x)i(z)}}\cdot\ind{W(y,z)\geq \zeta_{i(y)i(z)}})]\\
     & =\exp\left\{\int_{\R_+}\int_{[0,1]}\int_{[0,1]} (1-e^{it\ind{u\leq W(x,z)}\ind{v\leq W(y,z)}})t\diff u\diff v\diff z\right\}\\
     & =\exp\left\{t\sum_{n=1}^\infty\frac{(it)^n}{n!}\int_{\R_+}\int_{[0,1]}\int_{[0,1]} \ind{u\leq W(x,z)}\ind{v\leq W(y,z)}\diff u\diff v\diff z\right\}\\
     & =\exp\left\{t\mu_2(x,y)(e^{it}-1)\right\}.
\end{align*}
Hence, $C(x,y)$ is a Poisson random variable with rate $t\mu_2(x,y)$.
\end{proof}

Proposition~\ref{prop:cdegree} characterizes the distribution for a fixed pair of $(x,y)$ in $\cG_t$ with $t$ also fixed. In the next section, we extend our analysis to the asymptotic behavior of the 2-c-degree distribution when $t$ goes to infinity.

\subsection{Asymptotic distribution of common connections}
\label{subsec:asy}
\begin{figure}[h]
\centering
\includegraphics[scale=.7]{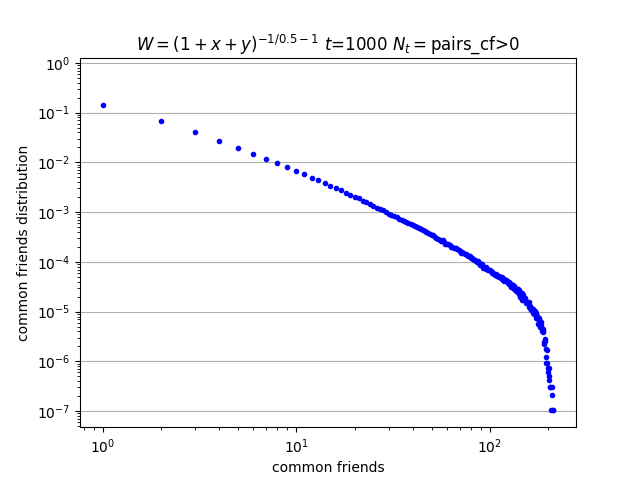}
\caption{The empirical distribution of common connections in a simulated sparse network generated from the graphex process with $W(x,y)=(1+x+y)^{-3}$, $x,y\ge 0$. }
\label{fig:cfrds}
\end{figure}

To analyze the asymptotic distribution, we start with a simulated example where $W(x,y)=(1+x+y)^{-3}$, $x,y\ge 0$. 
We compute the empirical distribution of the number of common friends, i.e., 
\[
\frac{\sum_{x,y\in\Pi_t^\eta}\ind{C(x,y)=k}}{\sum_{x,y\in\Pi_t^\eta}\ind{C(x,y)>0}},\qquad k\ge 1,
\]
and numerical results are plotted in Figure~\ref{fig:cfrds}. We observe that the plot looks linear until a certain point and then it quickly tapers down, indicating a certain cut-off prior to which a power-law behavior is quite evident. Excluding data beyond the cut-off,  we estimate the slope of the log-log plot and obtain an estimated slope between 1 and 2. From this we hypothesize that 
 the 2-c-degree of a randomly chosen pair of vertices, $D_c$, is roughly following
\[
\mathbb{P}(D_c=k)\approx k^{-1-a},\qquad a\in (0,1), 
\]
for large $k$, which implies, for $t$ large,
\begin{align}\label{eq:Dc_approx}
t\mathbb{P}\left(\frac{D_c}{t^{1/a}}>x\right)\approx x^{-a}.
\end{align}
Since $1/a>1$, then for a randomly chosen pair of vertices (as long as we move away from the cutoff), 
 the growth rate of the common friends is faster than $O(t)$. 
Note that by the construction of the model, for $x,y$ small, the expected number of common friends between vertices with labels $x,y$ is $t\mu_2(x,y)$, which is growing no faster than $O(t)$. This leads to the cut-off as observed in the figure.
Later in Theorem~\ref{thm:Cxy}, we stay away from the cut-off, and give the theoretical analysis on the asymptotic features for $x,y \ge b(t)\epsilon$, with $b(t)\to\infty$.

\begin{defn}\label{def:Cxy}
Define $N_{t}^\epsilon(k)$ as the number of vertex pairs with second  co-ordinate value $\eta>b(t)\epsilon$ and who have $k\geq1$ common friends in the graph $\cG_t$, i.e. 
\[
N_{t}^\epsilon(k):=\sum_{x,y\in\Pi_t^\eta,x,y>b(t)\epsilon} \ind{C(x,y)=k}, \qquad \epsilon>0.
\]
\end{defn}

We study the behavior of $N_{t}^\epsilon(k)$ when $W$ is multivariate regularly varying. When 
$W\in\MRV(-\alpha, h(t),\omega(x,y),\R_+^2\setminus\{\boldsymbol{0}\})$ is non-separable, we choose the scaling function to be $b\in \mathcal{RV}_{1/(2\alpha-1)}$, $\alpha>2$, such that 
\begin{align}
\label{eq:bt_nonsep}
\lim_{t\to\infty} t \mu_2(b(t)x, b(t)y) = \int_0^\infty \omega(x,z)\omega(y,z)\diff z.
\end{align}
When $W$ is separable, i.e., $W(x,y)=U(x)U(y)$,  and $W\in \MRV(-2\alpha,U^2(t),\omega(x,y), (0,\infty)^2)$, we set
$b(t) =(1/U)^{\leftarrow}(\sqrt{t})$, so that $b\in \mathcal{RV}_{1/(2\alpha)}$, 
$\alpha>1$ and 
$$
\mu_2\in \MRV\left(-2\alpha,b(t), \int_0^\infty \omega(x,z)\omega(y,z)\diff z, (0,\infty)^2\right).
$$

%
%
Assume that for $q>1/2$ and some constant $C_4>0$, we have
\begin{align}
\mu_4(x_1,x_2,x_3,x_4) &\le C_4 \left(\mu_2(x_1,x_2)\mu_2(x_3,x_4)\right)^q.\label{eq:mu4}
\end{align}
We now give the asymptotic behavior of $\E\left[N_{t}^\epsilon(k)\right]$ for both separable and non-separable cases. 

\begin{theorem}\label{thm:Cxy}
Suppose (i) $W\in \MRV(-\alpha,\R^2_+\setminus\{\bzero\})$ and condition \eqref{W_uni} is satisfied, or, (ii) $W(x,y)=U(x)U(y)$ with $U\in \RV_{-\alpha}$ for $\alpha>0$. Also  assume that \eqref{eq:mu4} is satisfied, 
and that $N_{t}^\epsilon(k)$ is as defined in Definition~\ref{def:Cxy}.
Let $b(t)\in \RV_{1/\gamma}$ for $\gamma>1$ and $\lambda(x,y)$ be some functions satisfying
\begin{align}\label{eq:lim_mu2}
\lim_{t\to\infty}t\mu_2(b(t)x, b(t)y) = \lambda(x,y),\qquad x,y >0.
\end{align}
Then for $k\ge 1$,
 we have
\begin{align}\label{eq:limit_Nk}
\frac{N_{t}^\epsilon(k)}{\left(tb(t)\right)^2}
\convp \frac{1}{k!} \int_\epsilon^\infty\int_\epsilon^\infty \left(\lambda(u,v) \right)^k e^{-\lambda (u,v)}
\diff u\diff v.
\end{align} 

\end{theorem}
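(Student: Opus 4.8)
The plan is to prove \eqref{eq:limit_Nk} by a first- and second-moment argument: I will show that $\E[N_t^\epsilon(k)]/(tb(t))^2$ converges to the right-hand side of \eqref{eq:limit_Nk} and that the variance of $N_t^\epsilon(k)/(tb(t))^2$ tends to $0$, after which Chebyshev's inequality delivers convergence in probability to the deterministic limit. Throughout write $p_k(s)=s^k e^{-s}/k!$ for the Poisson$(s)$ probability mass at $k$, and abbreviate $C(x,y)\equiv C(x,y,\Pi_t^\eta\cup\{x,y\},(\zeta))$.

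\emph{First moment.} Applying the extended Slivnyak--Mecke theorem (Theorem~\ref{thm:SM}) with $n=2$ and mean measure $t\,\diff x$, and then Proposition~\ref{prop:cdegree} (which gives $C(x,y)\sim\mathrm{Poisson}(t\mu_2(x,y))$), yields
\[
\E[N_t^\epsilon(k)]=t^2\int_{b(t)\epsilon}^\infty\int_{b(t)\epsilon}^\infty p_k\big(t\mu_2(x,y)\big)\,\diff x\,\diff y .
\]
The substitution $x=b(t)u,\ y=b(t)v$ turns this into $\E[N_t^\epsilon(k)]/(tb(t))^2=\int_\epsilon^\infty\int_\epsilon^\infty p_k\big(t\mu_2(b(t)u,b(t)v)\big)\,\diff u\,\diff v$, whose integrand converges pointwise to $p_k(\lambda(u,v))$ by \eqref{eq:lim_mu2}. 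To interchange limit and integral I would construct an integrable majorant on $(\epsilon,\infty)^2$: since $p_k(s)\le\min\{k^ke^{-k}/k!,\ s^k/k!\}$ it suffices to dominate $\big(t\mu_2(b(t)u,b(t)v)\big)^k$, and the Potter-type estimates \eqref{eq:Wbd}--\eqref{eq:Wbdint} used in the proof of Theorem~\ref{thm:W_uni} (together with their separable analogue) bound $t\mu_2(b(t)u,b(t)v)$ uniformly in large $t$ by a function decaying fast enough in $(u,v)$ that its $k$-th power is integrable over $(\epsilon,\infty)^2$. Dominated convergence then yields $\E[N_t^\epsilon(k)]/(tb(t))^2\to\frac1{k!}\int_\epsilon^\infty\int_\epsilon^\infty\lambda(u,v)^k e^{-\lambda(u,v)}\,\diff u\,\diff v$.

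\emph{Second moment.} Expanding $(N_t^\epsilon(k))^2$ and splitting the double sum according to whether the two index pairs share two, one, or no vertices, I apply Theorem~\ref{thm:SM} with $n=2,3,4$. The coincident ($n=2$) part equals $\E[N_t^\epsilon(k)]=O((tb(t))^2)=o((tb(t))^4)$. For two \emph{disjoint} pairs $(x,y),(x',y')$ the crucial step is a thinning/marking argument: classifying each point of $\Pi_t^\eta$ by which of the two pairs it is a common neighbour of, the numbers of points that neighbour only $(x,y)$, only $(x',y')$, and all four are independent Poisson variables with rates $t(\mu_2(x,y)-\mu_4)$, $t(\mu_2(x',y')-\mu_4)$ and $t\mu_4$, where $\mu_4=\mu_4(x,y,x',y')$. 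Hence $C(x,y)$ and $C(x',y')$ become independent once the shared component is null, giving $|\mathrm{Cov}(\ind{C(x,y)=k},\ind{C(x',y')=k})|\le 3\,t\mu_4(x,y,x',y')$; integrating this, invoking \eqref{eq:mu4}, and rescaling bounds the disjoint correction by a constant times $t^{5}\big(\int_{(b(t)\epsilon,\infty)^2}\mu_2^{q}\big)^2\asymp t^{\,5-2q}b(t)^4$, which is $o((tb(t))^4)$ precisely because $q>1/2$. For two pairs sharing one vertex I bound $\ind{\cdot=k}\le\ind{\cdot\ge1}$ (valid as $k\ge1$) and combine Markov's inequality with the Cauchy--Schwarz bound $\PR(C(x,y)\ge1,\ C(x,x')\ge1)\le t\sqrt{\mu_2(x,y)\mu_2(x,x')}$ (or, in the separable case, the explicit product forms of $\mu_2,\mu_3$), so this contribution is at most of order $t^{3}b(t)^3=o((tb(t))^4)$. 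The remaining product-of-marginals part of the disjoint term reproduces $(\E[N_t^\epsilon(k)])^2$ up to the negligible correction coming from the two augmenting points being counted as potential neighbours. Collecting these estimates gives $\mathrm{Var}(N_t^\epsilon(k))=o((tb(t))^4)$.

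Finally, Chebyshev's inequality applied to $N_t^\epsilon(k)/(tb(t))^2$, together with the first- and second-moment computations, yields \eqref{eq:limit_Nk}. I expect the main obstacle to be the disjoint-pair term of the second moment: setting up the bivariate-Poisson (thinning) decomposition so that the dependence of the two indicators is carried entirely by their common component, and then quantifying the covariance by $t\mu_4$ so that assumption \eqref{eq:mu4} with $q>1/2$ pushes the correction below the $(tb(t))^4$ scale. Verifying that the attendant spatial integrals (of $\lambda^k$, $\lambda^q$, and $\sqrt{\lambda}$) are finite in the relevant parameter ranges, via the Potter bounds and uniform-convergence statements of Theorem~\ref{thm:W_uni}, is the main secondary technical burden.
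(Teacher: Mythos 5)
Your proposal is correct and follows essentially the same route as the paper: the first moment via the extended Slivnyak--Mecke theorem and Proposition~\ref{prop:cdegree} plus dominated convergence, then a second-moment/Chebyshev argument based on the identical three-way decomposition of $\left(N_t^\epsilon(k)\right)^2$ into index pairs sharing two, one, or no vertices. Your covariance bound $3t\mu_4$ for disjoint pairs (dependence carried by the shared Poisson component) and the Markov/Cauchy--Schwarz treatment of the one-shared-vertex term are just streamlined versions of the paper's explicit Poisson-pmf expansions, which invoke \eqref{eq:mu4} with $q>1/2$ and the bound $\mu_3(x_1,x_2,x_3)\le\sqrt{\mu_2(x_1,x_2)\mu_2(x_1,x_3)}$ to reach the same orders $t^{-(2q-1)}\left(tb(t)\right)^4$ and $t^3 b(t)^3$ that you obtain.
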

\begin{proof}
First note that \eqref{eq:lim_mu2} is a consequence of either Theorem \ref{thm:W_uni} where $\gamma=2\alpha-1$, or, Lemma \ref{lem:Vseparable} where $\gamma=2\alpha$. From Proposition \ref{prop:cdegree}, we have for  $x,y>0$, $C(x,y)$ follows a Poisson distribution with rate $t\mu_2(x,y)$. Now
by the extended Slivnyak-Mecke theorem given in Theorem~\ref{thm:SM}, we see that
\begin{align}
	\E[N_{t}^\epsilon(k)]&=t^2\int_{b(t)\epsilon}^\infty\int_{b(t)\epsilon}^\infty\mathbb{P}[C(x,y)=k]\diff x \diff y\nonumber\\
	&=t^2\int_{b(t)\epsilon}^\infty\int_{b(t)\epsilon}^\infty\frac{(t\mu_2(x,y))^k}{k!}e^{-t\mu_2(x,y)}\diff x \diff y\nonumber\\
	&=t^2b(t)^2\int_{\epsilon}^\infty\int_{\epsilon}^\infty \frac{(t\mu_2(b(t)u,b(t)v))^k}{k!}e^{-t\mu_2(b(t)u,b(t)v)}\diff u \diff v,
	\label{eq:int_Nk}
\end{align}
which, either by using Theorem~\ref{thm:W_uni}, or, Lemma~\ref{lem:Vseparable}, converges to the limit on the 
right hand side of \eqref{eq:limit_Nk} when divided by $t^2b(t)^2$ (as $t\to\infty$). With \eqref{eq:int_Nk} available, we then show the convergence in \eqref{eq:limit_Nk} by proving 
\begin{align}\label{eq:claim}
\text{Var}\left[N_{t}^\epsilon(k)\right] = O\left(t^{-\kappa}\ell(t) \E\left[N_{t}^\epsilon(k)\right]^2\right),
\end{align}
for some $\kappa>0$ and a slowly varying function $\ell(t)$.
Note that 
\begin{align*}
\left(N_{t}^\epsilon(k)\right)^2 &= N_t^\epsilon(k) + \sum_{x_i\in\Pi_t^\eta, i=1,2,3} 
\ind{C(x_1,x_2)=k, C(x_1,x_3)=k}\ind{x_1\neq x_2\neq x_3}\ind{x_i>b(t)\epsilon, i=1,2,3}\\
&\quad+\sum_{x_i\in\Pi_t^\eta, i=1,\ldots,4} 
\ind{C(x_1,x_2)=k, C(x_3,x_4)=k}\ind{x_1\neq x_2\neq x_3\neq x_4}\ind{x_i>b(t)\epsilon,i=1,\ldots,4}\\
&=: N_t^\epsilon(k) + A_t^\epsilon(k) +B_t^\epsilon(k) .
\end{align*}
Since
\[
\mu_4(x_1,x_2,x_3,x_4)\le \frac{1}{2}\left(\mu_2(x_1,x_2)+\mu_2(x_3,x_4)\right),
\]
then applying Theorem~\ref{thm:SM} and the assumption in \eqref{eq:mu4} gives that 
\begin{align}
\E&\left[B_t^\epsilon(k) \right]- \left(\E\left[N_{t}^\epsilon(k)\right]\right)^2 \nonumber\\
&\le t^4\sum_{l=0}^{k-1} \int_{(b(t)\epsilon,\infty)^4}\frac{\left(t\mu_2(x_1,x_2)\right)^l}{l!}
\frac{\left(t\mu_2(x_3,x_4)\right)^l}{l!} \frac{\left(t\mu_4(x_1,x_2,x_3,x_4)\right)^{k-l}}{(k-l)!}\nonumber\\
&\qquad\quad \times e^{-t(\mu_2(x_1,x_2)+\mu_2(x_3,x_4)-\mu_4(x_1,x_2,x_3,x_4))}
\diff x_1 \diff x_2 \diff x_3 \diff x_4\nonumber\\
&\le t^4\sum_{l=0}^{k-1} \int_{(b(t)\epsilon,\infty)^4}\frac{\left(t\mu_2(x_1,x_2)\right)^l}{l!}
\frac{\left(t\mu_2(x_3,x_4)\right)^l}{l!} \frac{\left(t\mu_4(x_1,x_2,x_3,x_4)\right)^{k-l}}{(k-l)!}\nonumber\\
&\qquad\quad \times e^{-t/2(\mu_2(x_1,x_2)+\mu_2(x_3,x_4))}
\diff x_1 \diff x_2 \diff x_3 \diff x_4\nonumber\\
&\le \sum_{l=0}^{k-1} \frac{C_4^{2l} t^{4+k+l}}{(l!)^2 (k-l)!} \left(\int_{(b(t)\epsilon,\infty)^2}
\left(\mu_2(x_1,x_2)\right)^{l+q(k-l)}e^{-t/2\mu_2(x_1,x_2)}
 \diff x_1 \diff x_2 \right)^2,\label{eq:var_upper}\\
\intertext{and using the results in \eqref{eq:int_Nk}, the upper bound above is of order}
&= O\left(t^{4-(2q-1)}(b(t))^4\right) =  O\left(t^{-(2q-1)}(tb(t))^4\right)  .\label{eq:var_bound}
\end{align}

For $\mu_3$, we have that
\begin{align*}
\mu_3(x_1,x_2,x_3)&= \int_0^\infty \prod_{i=1}^3 W(x_i,z)\diff z\\
& = \int_0^\infty \left(\sqrt{W(x_1,z)}W(x_2,z)\right)\left(\sqrt{W(x_1,z)}W(x_3,z)\right)\diff z\\
&\le \left(\int_0^\infty W(x_1,z)W(x_2,z)^2\diff z\right)^{1/2}
\left(\int_0^\infty W(x_1,z)W(x_3,z)^2\diff z\right)^{1/2}\\
&\le \sqrt{\mu_2(x_1,x_2)\mu_2(x_1,x_3)}.
\end{align*}
Then, similarly, for $\E\left[A_t^\epsilon(k) \right]$, the extended Slivnyak-Mecke theorem gives:
\begin{align}
\E\left[A_t^\epsilon(k) \right]
&\le t^3\sum_{l=0}^{k} \int_{(b(t)\epsilon,\infty)^3}\frac{\left(t\mu_2(x_1,x_2)\right)^l}{l!}
\frac{\left(t\mu_2(x_1,x_3)\right)^l}{l!} \frac{\left(t\mu_3(x_1,x_2,x_3)\right)^{k-l}}{(k-l)!}\nonumber\\
&\qquad\quad \times e^{-t(\mu_2(x_1,x_2)+\mu_2(x_1,x_3)-\mu_3(x_1,x_2,x_3))}
\diff x\diff x_1 \diff x_2\nonumber\\
&\le \sum_{l=0}^{k} \frac{t^{3+k+l}}{(l!)^2 (k-l)!} \int_{(b(t)\epsilon,\infty)^3}
\left(\mu_2(x_1,x_2)\right)^{l+(k-l)/2}\left(\mu_2(x_1,x_3)\right)^{l+(k-l)/2}\nonumber\\
&\qquad\quad e^{-t/2(\mu_2(x_1,x_2)+\mu_2(x_1,x_3))}
\diff x_1 \diff x_2 \diff x_3, \nonumber\\
\intertext{and applying the same argument as in \eqref{eq:int_Nk}, the bound is of order}
&= O\left(t^3 \left(b(t)\right)^3\right) = O(t^{-(1+1/\gamma)}\ell(t) (tb(t))^4),\label{eq:var_upper2}
\end{align}
where $b(t)\in \RV_{1/\gamma}$ and we write $1/b(t)= t^{-(1+1/\gamma)}\ell(t)$ for some slowly varying function $\ell(t)$.
Combining \eqref{eq:var_upper2} with \eqref{eq:var_bound},
\eqref{eq:claim} holds with $\kappa \equiv \min(2q-1,1+\frac 1\gamma)>0$.
\end{proof}

\medskip

\begin{remark}
When $W(\cdot,\cdot)$ is separable, the condition in \eqref{eq:mu4} holds with $q=1$.
Then the upper bounds in \eqref{eq:var_upper} and \eqref{eq:var_upper2} are of orders
\[
O\left(t^{2/\alpha+3}\left(\ell_1(t)\right)^4\right),
\qquad O\left(t^{3/(2\alpha)+3}\left(\ell_1(t)\right)^3\right),
\]
respectively, where $\ell_1(t)$ is a slowly varying function such that $b(t)=t^{1/(2\alpha)}\ell_1(t)$.
Therefore, the convergence results in Theorem~\ref{thm:Cxy} hold for $W(\cdot,\cdot)$ separable.
\end{remark}

\medskip

\subsubsection{Tail asymptotics for common connections}
Note that the convergence results in Theorem~\ref{thm:Cxy} do not provide an explicit explanation for the power-law behavior as observed in Figure~\ref{fig:cfrds}. Hence, we proceed with a detailed discussion on the tail distribution of 2-c-degrees.
Consider the function
\[
f(x) = x^k e^{-x}, \qquad x\in A,
\]
and we see that 
$f$ is increasing in $x$ if $k\ge \sup_{x\in A} x$.

\paragraph{Separable case.}
In the separable case, assume in addition that $U(x)U(y)\le U(x+y)$, $x,y\ge 0$, then
the integrand in \eqref{eq:int_Nk} satisfies
\[
\left(t\mu_2(b(t)x,b(t)y)\right)^k e^{-t\mu_2(b(t)x,b(t)y)}
\le \left(C\cdot tU(b(t)(x+y))\right)^k e^{-C\cdot tU(b(t)(x+y))},
\]
with $C=\int_0^\infty U^2(z)\diff z$,
as long as $k\ge\sup_{x,y\ge b(t)\epsilon} t\mu_2(b(t)x,b(t)y)
= \lambda(\epsilon,\epsilon)$.
Therefore, for $k\ge \lambda(\epsilon,\epsilon)$, we have
\begin{align}
\frac{1}{k!}&\int_{\epsilon}^\infty\int_{\epsilon}^\infty
\left(t\mu_2(b(t)x,b(t)y)\right)^k e^{-t\mu_2(b(t)x,b(t)y)} \diff x \diff y\nonumber\\
&\le \frac{1}{k!}\int_{\epsilon}^\infty\int_{\epsilon}^\infty
\left(C\cdot tU(b(t)(x+y))\right)^k e^{-C\cdot tU(b(t)(x+y))} \diff x \diff y,
\label{eq:bound_Cxy}
\end{align}
and since $b(t)=(1/U)^{\leftarrow}(\sqrt{t})$ in the separable case, then
for some slowly varying function $\ell$, we have
\[
tU(b(t)(x+y)) = t^{1/2}(x+y)^{-\alpha}\frac{\ell(b(t)(x+y))}{\ell(b(t))}.
\]
Assume further that
\begin{align}\label{eq:sep_ell}
\lim_{t\to\infty}\ell(t) = C_0>0,
\end{align}
 then
the upper bound in \eqref{eq:bound_Cxy} is of order $O(t^{1/\alpha} k^{-1-2/\alpha})$.
By \eqref{eq:Dc_approx}, we may write $k=O(t^{1/a})$, $a\in (0,1)$, then the upper bound is of order
\[
k^{-\frac{1}{\alpha}(2-a)-1},
\]
and $\frac{1}{\alpha}(2-a)+1\in (1+1/\alpha, 1+2/\alpha)$.

Next, we give a lower bound for the asymptotic power-law behavior of the c-degrees in the separable case.
Since
\begin{align*}
\mu_2(x,y) = CU(x)U(y) \le \frac C2 (U^2(x)+U^2(y)),
\end{align*}
then we have for $b\in \mathcal{RV}_{1/(2\alpha)}$,
\begin{align}
\frac{b^2(t)}{k!}& \int_{\epsilon}^\infty\int_{\epsilon}^\infty
\left(t\mu_2(x,y)\right)^k e^{-t\mu_2(x,y)} \diff x \diff y\nonumber\\
&\ge \frac{1}{k!}\int_{\epsilon}^\infty\int_{\epsilon}^\infty
\left[tCU\bigl(b(t)x\bigr)U\bigl(b(t)y\bigr)\right]^k e^{-t \frac C2 \left(U^2\bigl(b(t)x\bigr)+U^2\bigl(b(t)y\bigr)\right)} \diff x \diff y \nonumber\\
& = C^k \frac{t^{k}}{k!}\left[\int_0^\infty U^k\bigl(b(t)x\bigr) e^{-\frac{tC}2 U^2\bigl(b(t)x\bigr)} \diff x
-\int_0^\epsilon U^k\bigl(b(t)x\bigr) e^{-\frac{tC}2 U^2\bigl(b(t)x\bigr)} \diff x
\right]^2. \nonumber
\end{align}
Using \cite[Lemma S3.5]{caron_etal:2020}, since $U^2(x) \in \RV_{-2\alpha}$,
\begin{align*}
\int_0^\infty U^k(x) e^{-\frac{tC}2 U^2(x)} \diff x \sim \frac{1}{2\alpha}\left(\frac{tC}{2}\right)^{\frac 1{2\alpha}-\frac k2} \Gamma\left(\frac k2 -\frac 1{2\alpha}\right) \ell_1(t), 
\end{align*}
for some slowly varying function $\ell_1(t)$. 
Note that when $\epsilon$ is small, 
\[
\int_0^\epsilon\left(t^{1/2}U\bigl(b(t)x\bigr)\right)^k e^{-\frac{Ct}{2}U^2\bigl(b(t)x\bigr)}\diff x
\approx \int_0^\epsilon x^{-\alpha k}e^{-\frac{C}{2}x^{-2\alpha}}\diff x,
\]
which is also small.
Therefore, using Stirling's approximation,
\begin{align}
\frac{1}{k!}&\int_{b(t)\epsilon}^\infty\int_{b(t)\epsilon}^\infty
\left(t\mu_2(x,y)\right)^k e^{-t\mu_2(x,y)} \diff x \diff y\nonumber\\
& \gtrapprox \;  C^k \frac{t^{k}}{k!}  (2\alpha)^{-2} \left(\frac{tC}2\right)^{\frac 1{\alpha}-k} \left(\Gamma\left(\frac k2 -\frac 1{2\alpha}\right)\right)^2 (\ell_1(t))^2 \nonumber\\
& = \frac{C^{1/\alpha}}{2^{2+1/\alpha-k}\alpha^2} \cdot t^{\frac1{\alpha}} \cdot  \left(\frac{\Gamma(\frac k2 - \frac 1{2\alpha})}{\Gamma(\frac k2+1)} \right)^2  \cdot \left(\frac{\Gamma(\frac k2+1)} {\Gamma{(k+1)}}\right)^2 (\ell_1(t))^2 \nonumber\\
& \approx \frac{C^{1/\alpha}}{2^{2+1/\alpha-k}\alpha^2} \cdot t^{2+\frac1{\alpha}} \cdot \left(\frac{k}{2}\right)^{-\frac 1\alpha-2} \cdot \frac{\sqrt{2\pi k}}{2^{k+1}} \; (\ell_1(t))^2 \nonumber\\
& = O\left(t^{\frac 1\alpha} k^{-\left(\frac 1\alpha+\frac32\right)}\right),\label{eq:lower}
\end{align}
if we assume \eqref{eq:sep_ell}.  Hence for small $\epsilon$ and $k$ is large, \eqref{eq:lower} provides a lower bound for the asymptotic power-law behavior of the 2-c-degree distribution, i.e., for large $t$, 
\[ \frac{N_t^{\epsilon}(k)}{t^2b(t)^2} \gtrapprox \; O\left(k^{-\left(\frac 1\alpha+\frac32\right)}\right) \quad\quad (\text{in probability}).\]


\paragraph{Non-separable case.}
In the non-separable case, we need 
the following technical assumption in order to further study the limit behavior of 
$\E[N_{t}^\epsilon(k)]/(tb(t))^2$.
For $(x,y)\in(0,\infty)^2$, assume 
there exist constants $C_0 > 0$ and $x_0 \geq 0$ such that for all $ x, y \geq x_0$,
\begin{align}
W(x, y) \leq C_0(\mu_1(x))^\theta(\mu_1(y))^\theta, \quad \mu_1(x_0) > 0,
\label{ass2}
\end{align}
where $\theta>1/2$ is a positive constant.
As discussed in \cite{caron_etal:2020}, the assumption in \eqref{ass2} is satisfied with $\theta=0$ when the function $W$ is separable. 
Another example is 
\begin{align}\label{eq:ex_nonsep}
W(x,y) = (1+x+y)^{-\alpha},\qquad\alpha>2,
\end{align}
 then we have
\begin{align*}
\mu_1(x) = \frac{1}{\alpha-1}(1+x)^{-(\alpha-1)},
\qquad
W(x,y) \le \left({\alpha-1}\right)^{\frac{\alpha}{\alpha-1}}\left(\mu_1(x)\mu_1(y)\right)^{\frac{\alpha}{2(\alpha-1)}}.
\end{align*}

In fact, the assumption in \eqref{ass2} implies for $x,y>0$,
\begin{align*}
\mu_2(x,y) &=\int_0^\infty W(x,z)W(y,z)\diff z\\
&\le C_0^2 \int_0^\infty \mu_1(z)^{2\theta}\diff z \cdot\left(\mu_1(x)\mu_1(y)\right)^\theta
=: C'\left(\mu_1(x)\mu_1(y)\right)^\theta.
\end{align*}
Let $b\in \mathcal{RV}_{1/(2\alpha-1)}$ be some scaling function such that
\eqref{eq:bt_nonsep} holds. Assume further that $\mu_1(x)\mu_1(y)\le \mu_1(x+y)$ 
then we have for $x,y\ge \epsilon>0$,
\begin{align*}
t\mu_2(b(t)x,b(t)y) \le t\left(\mu_1\bigl(b(t)(x+y)\bigr)\right)^\theta
= t^{1-\frac{\theta(\alpha-1)}{2\alpha-1}} (x+y)^{-(\alpha-1)}\frac{\ell_0(b(t)(x+y))}{\left(\ell_b(t)\right)^{\alpha-1}},
\end{align*}
where $\ell_0(\cdot)$, $\ell_b(\cdot)$ are two slowly varying functions satisfying
\[
\mu_1(x)=x^{-(\alpha-1)}\ell_0(x),\qquad 
b(t) = t^{1/(2\alpha-1)}\ell_b(t).
\]
Suppose also both $\ell_0$ and $\ell_b$ satisfy \eqref{eq:sep_ell}.
Then repeating the reasoning in the separable case, we see that the upper bound for
\begin{align*}
\frac{1}{k!}&\int_{\epsilon}^\infty\int_{\epsilon}^\infty
\left(t\mu_2(b(t)x,b(t)y)\right)^k e^{-t\mu_2(b(t)x,b(t)y)} \diff x \diff y
\end{align*}
is of order
\[
t^{\frac{2}{\theta(\alpha-1)}-\frac{2}{2\alpha-1}} k^{-1-\frac{2}{\theta(\alpha-1)}}
= t^{\frac{4}{\alpha}-\frac{2}{2\alpha-1}}k^{-1-\frac{4}{\alpha}}.
\]
Recall \eqref{eq:Dc_approx}, and we may write $k=O(t^{1/a})$, with $a\in (0,1)$, then the upper bound is of order
\[
k^{-1-\frac{4}{\alpha}(1-a)-\frac{2a}{2\alpha-1}},
\]
and $1+\frac{4}{\alpha}(1-a)+\frac{2a}{2\alpha-1}\in (1+2/(2\alpha-1), 1+4/\alpha)$.
Note that for the example in \eqref{eq:ex_nonsep}, we have $\ell_0, \ell_b\equiv 1$ and 
$\theta = \frac{\alpha}{2(\alpha-1)}$, which satisfy all those conditions listed above.

\section{Simulation Studies}\label{sec:sim}

To see how tight the bounds derived in the previous section are, especially when the entire network is considered (e.g. the scenario in Figure~\ref{fig:cfrds}), we further examine the slope estimates through simulation studies. We first choose two different sets of values for $\alpha$:
\begin{enumerate}
\item[(1)] Separable case: $W(x,y)=(1+x)^{-\alpha}(1+y)^{-\alpha}$, $\alpha\in \{1.5,2,3,4,5\}$.
\item[(2)] Non-separable case: $W(x,y)=(1+x+y)^{-\alpha}$, $\alpha\in \{2, 2.6,3,4,5\}$.
\end{enumerate}
For each combination of $W(\cdot,\cdot)$ and $\alpha$, we simulate 500 replications with $t=1000$.
Here all simulated replications have a similar distribution shape as depicted in Figure~\ref{fig:cfrds}.
In other words, when plotted on a log-log scale, the empirical distribution shows a linearly decaying pattern as long as we stay away from the cutoff. Therefore, we estimate the power-law index by the absolute value of the slope estimate from a simple linear regression. 
To avoid the cutoff area, we choose a proportion of the empirical distribution such that the fitted regression returns an
$R^2\ge 99.5\%$.
This is done by first fitting a linear regression to the entire empirical distribution, then leaving out observations in the cutoff area until the target $R^2$ is achieved.

\begin{figure}[h]
\centering
\includegraphics[scale=.6]{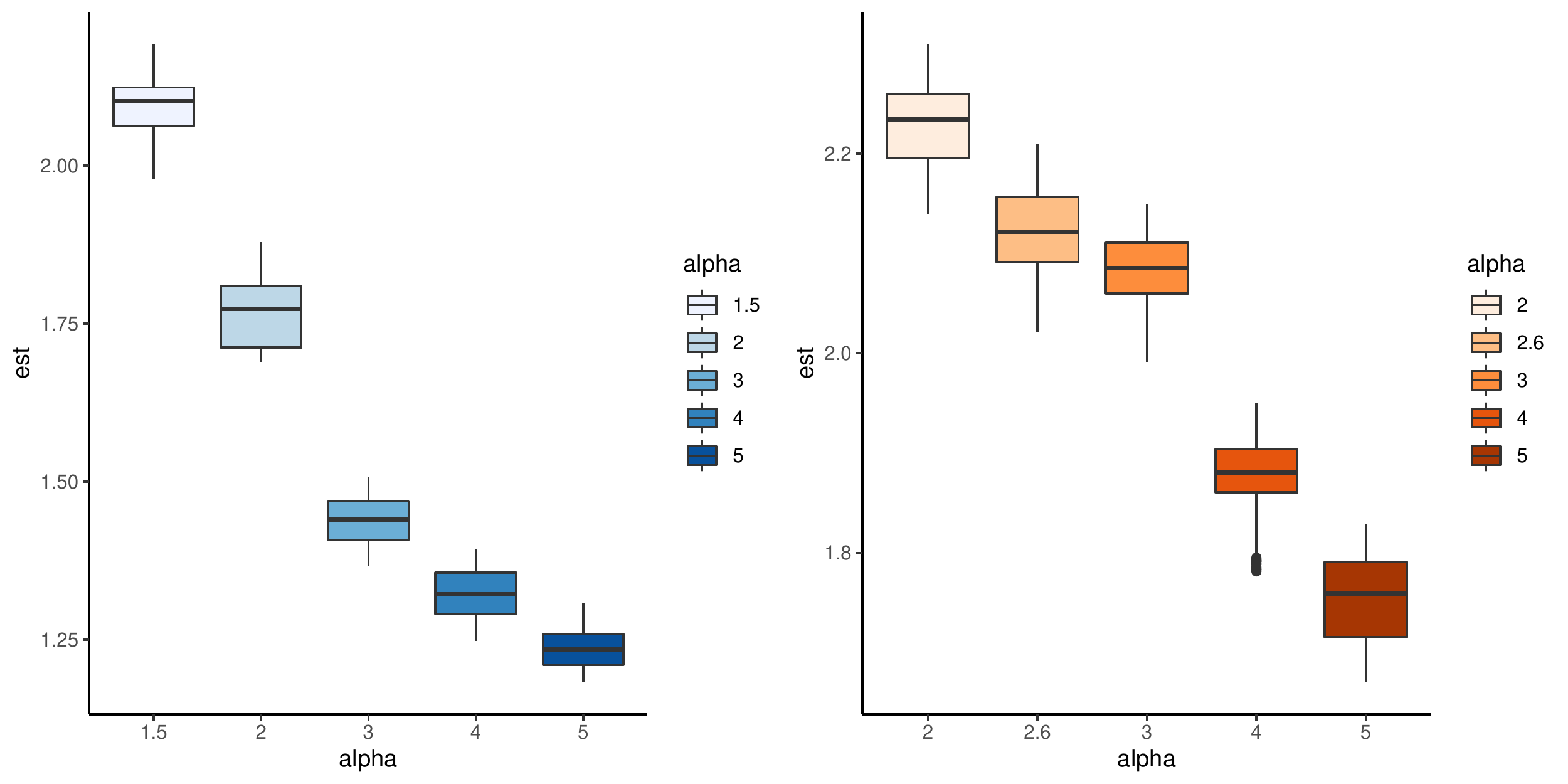}
\caption{Boxplots of estimated tail indices for the distribution of common connections based on simulated sparse networks with graphex function $W(x,y)$. Left: $W(x,y)=(1+x)^{-\alpha}(1+y)^{-\alpha}$, $\alpha\in \{1.5,2,3,4,5\}$. Right: $W(x,y)=(1+x+y)^{-\alpha}$, $\alpha\in \{2, 2.6,3,4,5\}$.}\label{fig:boxplot}
\end{figure}

Following the method described above, we collect all tail index estimates (slope estimates) for the distribution of common connections, and plot them as boxplots in Figure~\ref{fig:boxplot}. The left panel contains all simulated separable examples, i.e. 
$W(x,y)=(1+x)^{-\alpha}(1+y)^{-\alpha}$, $\alpha\in \{1.5,2,3,4,5\}$, whereas all non-separable examples, i.e. $W(x,y)=(1+x+y)^{-\alpha}$, $\alpha\in \{2, 2.6,3,4,5\}$, are given in the right panel of Figure~\ref{fig:boxplot}.
Both separable and non-separable cases reveal the same pattern that when $W(\cdot,\cdot)$ decays at a faster rate or $\alpha$ is large, the distribution of common connections will have a heavier tail.
Similar pattern is also observed from the empirical degree distribution as given in \cite{caron:fox:2017, caron_etal:2020,veitch:roy:2015}.

To assess the tightness of bounds provided in the previous section, we summarize 
the mean, variance and range based on the 500 replications for each combination of $W(\cdot,\cdot)$ and $\alpha$. For the separable case, we compute the \emph{coverage proportion} for the derived upper and lower bounds for the tail index:
\begin{align}
\label{eq:bound_sep}
\left[1+\frac{1}{\alpha},\, \max\left(\frac{3}{2}+\frac{1}{\alpha}, 1+\frac{2}{\alpha}\right)\right],
\end{align}
i.e., the proportion of estimated tail indices that are within the interval in \eqref{eq:bound_sep}. Numerical results are given in Table~\ref{table:sep}.
Overall, the mean and standard deviation are both decreasing as $\alpha$ increases,
and the coverage proportion remains high, as long as $\alpha$ is neither too small nor too large.
This indicates that the bounds indicated in \eqref{eq:bound_sep} perform well in these examples, even though we now take all $(x,y)\in \Pi^\eta_t$ into consideration.

For the non-separable case, we examine the tightness of the derived upper bound for the tail index, i.e.,  
\begin{align}
\label{eq:bound_nonsep}
\left(1+\frac{2}{(2\alpha-1)}, 1+\frac{4}{\alpha}\right),
\end{align}
in Table~\ref{table:nonsep}.
Similar to the separable case, both mean and standard deviation are decreasing as $\alpha$ increases. Except the $\alpha=2$ case, all other coverage proportions reach 100\%. The relative poor coverage proportion for $\alpha=2$ may be due to the fact that $\int_0^\infty\mu_1(x)\diff x=\infty$ when $\alpha=2$. 

Comparing Tables~\ref{table:sep} and \ref{table:nonsep}, we see that for the same $\alpha$, a separable $W(\,\cdot\,,\,\cdot\,)$ tends to generate a heavier tail for the common connection distribution than that in a non-separable $W(\,\cdot\,,\,\cdot\,)$ case. Note that although in both cases $W\in \MRV(-\alpha, \R_+^2\setminus\{\bzero\})$, these two models tend to generate structures which are quite different, for example, we obtain different behavior of graphex marginals $\mu_1$ and $\mu_2$ in the two cases. Using Theorem~\ref{thm:W_uni}, for the non-separable $W$ we have  $\mu_1\in \RV_{-(\alpha-1)}$. On the other hand, for the separable case, $W\in \MRV(-\alpha,\R_+^2\setminus\{\bzero\})$ and moreover $W\in \MRV(-2\alpha,(0,\infty)^2)$, and from Lemma~\ref{lem:Vseparable} we have $\mu_1\in \RV_{-\alpha}$. In order to compare models with similar average connection behaviors, we tabulate in Table~\ref{table:mu1same} the means and ranges of the estimated slope, where the $\mu_1$ tail indices are the same in the two models. Even here we observe that the common connection distribution tends to have heavier tails in the separable case. A similar comparison matching $\mu_2$ indices in the separable ($\alpha=1.5$ with $\mu_2$ index $2\alpha=3$) and non-separable ($\alpha=2$ with $\mu_2$ index $2\alpha-1=3$) cases also leads to observing a heavier tail behavior in the former. 

\begin{table}[h!]
\centering
\begin{tabular}{c| c c c c}
\hline
$\alpha$ & Mean & Std Dev & Range & Coverage (\%)\\
\hline
1.5 & 2.095 & 0.052 & $[1.980, 2.193]$ & 88.8\\
2 & 1.769 & 0.056 & $[1.689, 1.879]$ & 100\\
3 & 1.438 & 0.039 & $[1.366, 1.508]$ & 100\\
4 & 1.322 & 0.041 & $[1.249, 1.394]$ & 99\\
5 & 1.238 & 0.033 & $[1.182, 1.308]$ & 84.8\\
\hline
\end{tabular}
\caption{Mean, standard deviation, and range based on the 500 replications for each combination of $W(\cdot,\cdot)$ and $\alpha$ in the separable case. The coverage proportion of the upper and lower bounds in \eqref{eq:bound_sep} is given in the last column.}\label{table:sep}
\end{table}

\begin{table}[h!]
\centering
\begin{tabular}{c| c c c c}
\hline
$\alpha$ & Mean & Std Dev & Range & Coverage (\%)\\
\hline
2 & 2.228 & 0.045 & $[2.140, 2.310]$ & 81.4\\
2.6 & 2.121 & 0.049 & $[2.022, 2.210]$ & 100\\
3 & 2.081 & 0.041 & $[1.992, 2.150]$ & 100\\
4 & 1.878 & 0.040 & $[1.781, 1.950]$ & 100\\
5 & 1.754 & 0.045 & $[1.670, 1.829]$ & 100\\
\hline
\end{tabular}

\caption{Mean, standard deviation, and range based on the 500 replications for each combination of $W(\cdot,\cdot)$ and $\alpha$ in the non-separable case. The coverage proportion of the upper bound is given in the last column.}\label{table:nonsep}
\end{table}

\begin{table}[h!]
\centering
\begin{tabular}{c| c c c | c c c}
\hline 
& \multicolumn{3}{|c|}{Separable model} & \multicolumn{3}{|c}{Non-separable model}\\
\hline
Index of $\mu_1$ & $\alpha$ & Mean & Range & $\alpha$ & Mean & Range \\
\hline
2 & 2 & 1.769 & $[1.689, 1.879]$   & 3 & 2.081 & $[1.992, 2.150]$\\
3 & 3 & 1.438 & $[1.366, 1.508]$   & 4 & 1.878 & $[1.781, 1.950]$\\
4 & 4 & 1.322 & $[1.249, 1.394]$   & 5  & 1.754 &$[1.670, 1.829]$ \\
\hline
\end{tabular}

\caption{The $\alpha$ value, mean and range based on the 500 replications for each combination of $W(\cdot,\cdot)$ and $\alpha$ in both the non-separable case and separable cases where the tail indices of the univariate graphex marginal $\mu_1$ are the same.}\label{table:mu1same}
\end{table}

\section{Conclusion}\label{sec:conclusion}
In this paper, we work under the paradigm of sparse exchangeable graphs generated by multivariate regularly varying graphex functions and investigate the distributional behavior of common connections between pairs of vertices. We conclude that the asymptotic distribution of number of common connections of randomly chosen pair of vertices, given that the vertices chosen are above an (increasing) threshold also has a power-law behavior under certain regularity conditions. We distinguish between two types of generating graphex functions here, separable and non-separable, each of which leads to different asymptotic tail behavior. We manage to derive bounds for the tail behavior but not the exact rate, and our simulation studies verify the bounds obtained.
 
One future direction would be to find appropriate minimal conditions on the graphex function, so that we obtain tighter bounds for the tail rate of the distribution of number of common connections of randomly chosen pair of vertices. Network scientists have found this measure a good indicator for link-prediction between vertices. We believe understanding 2-c-degree functions, and possibly $k$-c-degree functions will also lead us towards a generating mechanism for networks like Facebook and LinkedIn, where connections are often made through friendship recommendation.

\section*{Data Availability Statement}
The datasets generated during and/or analyzed during the current study are available from the corresponding author on reasonable request.

\bibliographystyle{abbrvnat}
\bibliography{bibfilecc}

\begin{thebibliography}{30}
\providecommand{\natexlab}[1]{#1}
\providecommand{\url}[1]{\texttt{#1}}
\expandafter\ifx\csname urlstyle\endcsname\relax
  \providecommand{\doi}[1]{doi: #1}\else
  \providecommand{\doi}{doi: \begingroup \urlstyle{rm}\Url}\fi

\bibitem[Aldous(1981)]{aldous:1981}
D.~J. Aldous.
\newblock Representations for partially exchangeable arrays of random
  variables.
\newblock \emph{Journal of Multivariate Analysis}, 11\penalty0 (4):\penalty0
  581--598, 1981.

\bibitem[Barab\'asi et~al.(2002)Barab\'asi, Jeong, N{\'e}da, Ravasz, Schubert,
  and Vicsek]{barabasi:2002}
A.~Barab\'asi, H.~Jeong, Z.~N{\'e}da, E.~Ravasz, A.~Schubert, and T.~Vicsek.
\newblock Evolution of the social network of scientific collaborations.
\newblock \emph{Physica A: Statistical mechanics and its applications},
  311\penalty0 (3):\penalty0 590--614, 2002.

\bibitem[Bingham et~al.(1989)Bingham, Goldie, and
  Teugels]{bingham:goldie:teugels:1989}
N.~H. Bingham, C.~M. Goldie, and J.~L. Teugels.
\newblock \emph{Regular variation}, volume~27 of \emph{Encyclopedia of
  Mathematics and its Applications}.
\newblock Cambridge University Press, Cambridge, 1989.

\bibitem[{Bloznelis} and {Kurauskas}(2016)]{bloznelis:kurauskus:2016}
M.~{Bloznelis} and V.~{Kurauskas}.
\newblock Clustering function: another view on clustering coefficient.
\newblock \emph{Journal of Complex Networks}, 4\penalty0 (1):\penalty0 61--86,
  2016.

\bibitem[Borgs et~al.(2018)Borgs, Chayes, Cohn, and
  Holden]{borgs:chayes:cohn:holden:2018}
C.~Borgs, J.~T. Chayes, H.~Cohn, and N.~Holden.
\newblock Sparse exchangeable graphs and their limits via graphon processes.
\newblock \emph{Journal of Machine Learning Research}, 18\penalty0
  (210):\penalty0 1--71, 2018.

\bibitem[Borgs et~al.(2019)Borgs, Chayes, Cohn, and Veitch]{borgs:2019}
C.~Borgs, J.~T. Chayes, H.~Cohn, and V.~Veitch.
\newblock Sampling perspectives on sparse exchangeable graphs.
\newblock \emph{The Annals of Probability}, 47\penalty0 (5):\penalty0
  2754--2800, 2019.

\bibitem[Caron and Fox(2017)]{caron:fox:2017}
F.~Caron and E.~B. Fox.
\newblock Sparse graphs using exchangeable random measures.
\newblock \emph{J. R. Stat. Soc. Ser. B. Stat. Methodol.}, 79\penalty0
  (5):\penalty0 1295--1366, 2017.

\bibitem[Caron et~al.(2020)Caron, Panero, and Rousseau]{caron_etal:2020}
F.~Caron, F.~Panero, and J.~Rousseau.
\newblock On sparsity, power-law and clustering properties of graphex
  processes.
\newblock \emph{Arxiv preprint}, 2020.
\newblock URL \url{https://arxiv.org/pdf/1708.03120}.

\bibitem[Das and Ghosh(2021)]{das:ghosh:2021}
B.~Das and S.~Ghosh.
\newblock Growth of common friends in a preferential attachment model.
\newblock \emph{Stochastic Models}, forthcoming, 2021.
\newblock \doi{10.1080/15326349.2021.1910521}.

\bibitem[de~Haan and Resnick(1987)]{dehaan:resnick:1987}
L.~de~Haan and S.~I. Resnick.
\newblock On regular variation of probability densities.
\newblock \emph{Stochastic Processes and their Applications}, 25:\penalty0 83
  -- 93, 1987.

\bibitem[Diaconis and Janson(2008)]{diaconis:janson:2008}
P.~Diaconis and S.~Janson.
\newblock Graph limits and exchangeable random graphs.
\newblock \emph{Rend. Mat. Appl. (7)}, 28\penalty0 (1):\penalty0 33--61, 2008.

\bibitem[Gradshteyn and Ryzhik(2007)]{gradshteyn:ryzhik:2007}
I.~Gradshteyn and I.~Ryzhik.
\newblock \emph{Table of Integrals, series, and products}.
\newblock Academic Press, 7th edition, 2007.

\bibitem[Gupta et~al.(2013)Gupta, Goel, Lin, Sharma, Wang, and
  Zadeh]{guptaetal:2013}
P.~Gupta, A.~Goel, J.~Lin, A.~Sharma, D.~Wang, and R.~Zadeh.
\newblock {W}{T}{F}: The who to follow service at twitter.
\newblock In \emph{Proceedings of the 22Nd International Conference on World
  Wide Web}, WWW '13, pages 505--514, New York, NY, USA, 2013. ACM.

\bibitem[Herlau et~al.(2016)Herlau, Schmidt, and M\o~rup]{herlau_etal:2016}
T.~Herlau, M.~N. Schmidt, and M.~M\o~rup.
\newblock Completely random measures for modelling block-structured sparse
  networks.
\newblock In D.~Lee, M.~Sugiyama, U.~Luxburg, I.~Guyon, and R.~Garnett,
  editors, \emph{Advances in Neural Information Processing Systems}, volume~29,
  pages 4260--4268. Curran Associates, Inc., 2016.

\bibitem[Hoover(1979)]{hoover:1979}
D.~N. Hoover.
\newblock Relations on probability spaces and arrays of random variables.
\newblock \emph{preprint}, 1979.
\newblock URL \url{https://www.stat.berkeley.edu/~aldous/Research/hoover.pdf}.

\bibitem[Janson(2016)]{janson:2016}
S.~Janson.
\newblock Graphons and cut metric on sigma-finite measure spaces.
\newblock \emph{Arxiv preprint}, 2016.
\newblock URL \url{https://arxiv.org/pdf/1608.01833.pdf}.

\bibitem[Janson(2017)]{janson:2017}
S.~Janson.
\newblock On convergence for graphexes.
\newblock \emph{Arxiv preprint}, 2017.
\newblock URL \url{https://arxiv.org/pdf/1702.06389.pdf}.

\bibitem[Kallenberg(1990)]{kallenberg:1990}
O.~Kallenberg.
\newblock Exchangeable random measures in the plane.
\newblock \emph{Journal of Theoretical Probability}, 3\penalty0 (1):\penalty0
  81--136, 1990.

\bibitem[Kingman(1993)]{kingman:1993}
J.~F.~C. Kingman.
\newblock \emph{Poisson Processes}.
\newblock Oxford Studies in Probability. Clarendon Press, 1 edition, 1993.

\bibitem[Liben-Nowell and Kleinberg(2007)]{liben-nowell:2007}
D.~Liben-Nowell and J.~Kleinberg.
\newblock The link-prediction problem for social networks.
\newblock \emph{Journal of the American Society for Information Science and
  Technology}, 58\penalty0 (7):\penalty0 1019--1031, 2007.

\bibitem[Lov\'{a}sz and Szegedy(2006)]{lovasz:szegedy:2006}
L.~Lov\'{a}sz and B.~Szegedy.
\newblock Limits of dense graph sequences.
\newblock \emph{J. Combin. Theory Ser. B}, 96\penalty0 (6):\penalty0 933--957,
  2006.

\bibitem[Moller and Waagepetersen(2003)]{moller_etal:2003}
J.~Moller and R.~P. Waagepetersen.
\newblock \emph{Statistical inference and simulation for spatial point
  processes}.
\newblock CRC Press, 2003.

\bibitem[Naulet et~al.(2021)Naulet, Roy, Sharma, and Veitch]{naulet_etal:2021}
Z.~Naulet, D.~M. Roy, E.~Sharma, and V.~Veitch.
\newblock {Bootstrap estimators for the tail-index and for the count statistics
  of graphex processes}.
\newblock \emph{Electronic Journal of Statistics}, 15\penalty0 (1):\penalty0
  282 -- 325, 2021.

\bibitem[Newman(2001)]{newman:2001}
M.~E.~J. Newman.
\newblock Clustering and preferential attachment in growing networks.
\newblock \emph{Physical review E}, 64\penalty0 (2):\penalty0 025102, 2001.

\bibitem[Resnick(2007)]{resnick:2007}
S.~Resnick.
\newblock \emph{Heavy Tail Phenomena: Probabilistic and Statistical Modeling}.
\newblock Springer-Verlag, New York, 2007.

\bibitem[Stam(1977)]{stam:1977}
A.~Stam.
\newblock Regular variation in $\mathbb{R}_+^d$ and the {A}bel-{T}auber
  theorem.
\newblock Technical Report, unpublished, Mathematisch Instituut,
  Rijksuniversiteit Groningen, August 1977.

\bibitem[Todeschini et~al.(2020)Todeschini, Miscouridou, and
  Caron]{todeschini_etal:2020}
A.~Todeschini, X.~Miscouridou, and F.~Caron.
\newblock Exchangeable random measures for sparse and modular graphs with
  overlapping communities.
\newblock \emph{Journal of the Royal Statistical Society: Series B (Statistical
  Methodology)}, 82\penalty0 (2):\penalty0 487--520, 2020.

\bibitem[Veitch and Roy(2015)]{veitch:roy:2015}
V.~Veitch and D.~M. Roy.
\newblock The class of random graphs arising from exchangeable random measures.
\newblock \emph{Arxiv preprint}, 2015.
\newblock URL \url{https://arxiv.org/pdf/1512.03099.pdf}.

\bibitem[Veitch and Roy(2019)]{veitch:roy:2019}
V.~Veitch and D.~M. Roy.
\newblock Sampling and estimation for (sparse) exchangeable graphs.
\newblock \emph{The Annals of Statistics}, 47\penalty0 (6):\penalty0
  3274--3299, 2019.

\bibitem[Voitalov et~al.(2019)Voitalov, van~der Hoorn, van~der Hofstad, and
  Krioukov]{voitalov_etal:2019}
I.~Voitalov, P.~van~der Hoorn, R.~van~der Hofstad, and D.~Krioukov.
\newblock Scale-free networks well done.
\newblock \emph{Phys. Rev. Research}, 1\penalty0 (033034), Oct 2019.

\end{thebibliography}
\end{document}